\setlist[enumerate,1]{label={(\arabic*)},itemsep=\parskip} 
\setlist[itemize,1]{itemsep=\parskip} 
\newlist{thmlist}{enumerate}{2}
\setlist[thmlist,1]{label={\em(\roman*)},ref={(\roman*)},%
  itemsep=\parskip,leftmargin=*,align=left}
\setlist[thmlist,2]{label={\em(\alph*)},ref={(\alph*)},%
  itemsep=\parskip,leftmargin=*,align=left,topsep=0.1cm}
\newlist{remlist}{enumerate}{2}
\setlist[remlist,1]{label={(\roman*)},ref={(\roman*)},itemsep=\parskip,%
  leftmargin=*,align=left}
\setlist[remlist,2]{label={(\alph*)},ref={(\alph*)},itemsep=\parskip,%
  leftmargin=*,align=left,topsep=0.1cm}
\let\c@equation\c@subsubsection
\newtheorem{cor}[subsubsection]{Corollary}
\newtheorem{lem}[subsubsection]{Lemma}
\newtheorem{prop}[subsubsection]{Proposition}
\newtheorem{thm}[subsubsection]{Theorem}
\newtheorem*{claim*}{Claim}
\theoremstyle{definition}
\theoremstyle{remark}
\newtheorem{rem}[subsubsection]{Remark}
\newtheorem{exam}[subsubsection]{Example}
\newtheorem{notat}[subsubsection]{Notation}
\newcommand{\thmref}[1]{Theorem~\ref{#1}}
\newcommand{\ssecref}[1]{\sectsign\ref{#1}}
\newcommand{\lemref}[1]{Lemma~\ref{#1}}
\newcommand{\propref}[1]{Proposition~\ref{#1}}
\newcommand{\corref}[1]{Corollary~\ref{#1}}
\newcommand{\remref}[1]{Remark~\ref{#1}}
\newcommand{\notatref}[1]{Notation~\ref{#1}}
\renewcommand{\eqref}[1]{(\ref{#1})}
\newcommand{\itemref}[1]{\ref{#1}}
\newcommand{\changelocaltocdepth}[1]{%
  \addtocontents{toc}{\protect\setcounter{tocdepth}{#1}}%
  \setcounter{tocdepth}{#1}}
\newcommand{\nc}{\newcommand}
\nc{\renc}{\renewcommand}
\nc{\ssec}{\subsection}
\nc{\sssec}{\subsubsection}
\nc{\on}{\operatorname}
\nc{\term}[1]{#1\xspace}
\DeclareMathSymbol{A}{\mathalpha}{operators}{`A}
\DeclareMathSymbol{B}{\mathalpha}{operators}{`B}
\DeclareMathSymbol{C}{\mathalpha}{operators}{`C}
\DeclareMathSymbol{D}{\mathalpha}{operators}{`D}
\DeclareMathSymbol{E}{\mathalpha}{operators}{`E}
\DeclareMathSymbol{F}{\mathalpha}{operators}{`F}
\DeclareMathSymbol{G}{\mathalpha}{operators}{`G}
\DeclareMathSymbol{H}{\mathalpha}{operators}{`H}
\DeclareMathSymbol{I}{\mathalpha}{operators}{`I}
\DeclareMathSymbol{J}{\mathalpha}{operators}{`J}
\DeclareMathSymbol{K}{\mathalpha}{operators}{`K}
\DeclareMathSymbol{L}{\mathalpha}{operators}{`L}
\DeclareMathSymbol{M}{\mathalpha}{operators}{`M}
\DeclareMathSymbol{N}{\mathalpha}{operators}{`N}
\DeclareMathSymbol{O}{\mathalpha}{operators}{`O}
\DeclareMathSymbol{P}{\mathalpha}{operators}{`P}
\DeclareMathSymbol{Q}{\mathalpha}{operators}{`Q}
\DeclareMathSymbol{R}{\mathalpha}{operators}{`R}
\DeclareMathSymbol{S}{\mathalpha}{operators}{`S}
\DeclareMathSymbol{T}{\mathalpha}{operators}{`T}
\DeclareMathSymbol{U}{\mathalpha}{operators}{`U}
\DeclareMathSymbol{V}{\mathalpha}{operators}{`V}
\DeclareMathSymbol{W}{\mathalpha}{operators}{`W}
\DeclareMathSymbol{X}{\mathalpha}{operators}{`X}
\DeclareMathSymbol{Y}{\mathalpha}{operators}{`Y}
\DeclareMathSymbol{Z}{\mathalpha}{operators}{`Z}
\nc{\sA}{\ensuremath{\mathcal{A}}\xspace}
\nc{\sB}{\ensuremath{\mathcal{B}}\xspace}
\nc{\sC}{\ensuremath{\mathcal{C}}\xspace}
\nc{\sD}{\ensuremath{\mathcal{D}}\xspace}
\nc{\sE}{\ensuremath{\mathcal{E}}\xspace}
\nc{\sF}{\ensuremath{\mathcal{F}}\xspace}
\nc{\sG}{\ensuremath{\mathcal{G}}\xspace}
\nc{\sH}{\ensuremath{\mathcal{H}}\xspace}
\nc{\sI}{\ensuremath{\mathcal{I}}\xspace}
\nc{\sJ}{\ensuremath{\mathcal{J}}\xspace}
\nc{\sK}{\ensuremath{\mathcal{K}}\xspace}
\nc{\sL}{\ensuremath{\mathcal{L}}\xspace}
\nc{\sM}{\ensuremath{\mathcal{M}}\xspace}
\nc{\sN}{\ensuremath{\mathcal{N}}\xspace}
\nc{\sO}{\ensuremath{\mathcal{O}}\xspace}
\nc{\sP}{\ensuremath{\mathcal{P}}\xspace}
\nc{\sQ}{\ensuremath{\mathcal{Q}}\xspace}
\nc{\sR}{\ensuremath{\mathcal{R}}\xspace}
\nc{\sS}{\ensuremath{\mathcal{S}}\xspace}
\nc{\sT}{\ensuremath{\mathcal{T}}\xspace}
\nc{\sU}{\ensuremath{\mathcal{U}}\xspace}
\nc{\sV}{\ensuremath{\mathcal{V}}\xspace}
\nc{\sW}{\ensuremath{\mathcal{W}}\xspace}
\nc{\sX}{\ensuremath{\mathcal{X}}\xspace}
\nc{\sY}{\ensuremath{\mathcal{Y}}\xspace}
\nc{\sZ}{\ensuremath{\mathcal{Z}}\xspace}
\nc{\bA}{\ensuremath{\mathbf{A}}\xspace}
\nc{\bB}{\ensuremath{\mathbf{B}}\xspace}
\nc{\bC}{\ensuremath{\mathbf{C}}\xspace}
\nc{\bD}{\ensuremath{\mathbf{D}}\xspace}
\nc{\bE}{\ensuremath{\mathbf{E}}\xspace}
\nc{\bF}{\ensuremath{\mathbf{F}}\xspace}
\nc{\bG}{\ensuremath{\mathbf{G}}\xspace}
\nc{\bH}{\ensuremath{\mathbf{H}}\xspace}
\nc{\bI}{\ensuremath{\mathbf{I}}\xspace}
\nc{\bJ}{\ensuremath{\mathbf{J}}\xspace}
\nc{\bK}{\ensuremath{\mathbf{K}}\xspace}
\nc{\bL}{\ensuremath{\mathbf{L}}\xspace}
\nc{\bM}{\ensuremath{\mathbf{M}}\xspace}
\nc{\bN}{\ensuremath{\mathbf{N}}\xspace}
\nc{\bO}{\ensuremath{\mathbf{O}}\xspace}
\nc{\bP}{\ensuremath{\mathbf{P}}\xspace}
\nc{\bQ}{\ensuremath{\mathbf{Q}}\xspace}
\nc{\bR}{\ensuremath{\mathbf{R}}\xspace}
\nc{\bS}{\ensuremath{\mathbf{S}}\xspace}
\nc{\bT}{\ensuremath{\mathbf{T}}\xspace}
\nc{\bU}{\ensuremath{\mathbf{U}}\xspace}
\nc{\bV}{\ensuremath{\mathbf{V}}\xspace}
\nc{\bW}{\ensuremath{\mathbf{W}}\xspace}
\nc{\bX}{\ensuremath{\mathbf{X}}\xspace}
\nc{\bY}{\ensuremath{\mathbf{Y}}\xspace}
\nc{\bZ}{\ensuremath{\mathbf{Z}}\xspace}
\nc{\dA}{\ensuremath{\mathds{A}}\xspace}
\nc{\dB}{\ensuremath{\mathds{B}}\xspace}
\nc{\dC}{\ensuremath{\mathds{C}}\xspace}
\nc{\dD}{\ensuremath{\mathds{D}}\xspace}
\nc{\dE}{\ensuremath{\mathds{E}}\xspace}
\nc{\dF}{\ensuremath{\mathds{F}}\xspace}
\nc{\dG}{\ensuremath{\mathds{G}}\xspace}
\nc{\dH}{\ensuremath{\mathds{H}}\xspace}
\nc{\dI}{\ensuremath{\mathds{I}}\xspace}
\nc{\dJ}{\ensuremath{\mathds{J}}\xspace}
\nc{\dK}{\ensuremath{\mathds{K}}\xspace}
\nc{\dL}{\ensuremath{\mathds{L}}\xspace}
\nc{\dM}{\ensuremath{\mathds{M}}\xspace}
\nc{\dN}{\ensuremath{\mathds{N}}\xspace}
\nc{\dO}{\ensuremath{\mathds{O}}\xspace}
\nc{\dP}{\ensuremath{\mathds{P}}\xspace}
\nc{\dQ}{\ensuremath{\mathds{Q}}\xspace}
\nc{\dR}{\ensuremath{\mathds{R}}\xspace}
\nc{\dS}{\ensuremath{\mathds{S}}\xspace}
\nc{\dT}{\ensuremath{\mathds{T}}\xspace}
\nc{\dU}{\ensuremath{\mathds{U}}\xspace}
\nc{\dV}{\ensuremath{\mathds{V}}\xspace}
\nc{\dW}{\ensuremath{\mathds{W}}\xspace}
\nc{\dX}{\ensuremath{\mathds{X}}\xspace}
\nc{\dY}{\ensuremath{\mathds{Y}}\xspace}
\nc{\dZ}{\ensuremath{\mathds{Z}}\xspace}
\nc{\bbA}{\ensuremath{\mathbb{A}}\xspace}
\nc{\bbB}{\ensuremath{\mathbb{B}}\xspace}
\nc{\bbC}{\ensuremath{\mathbb{C}}\xspace}
\nc{\bbD}{\ensuremath{\mathbb{D}}\xspace}
\nc{\bbE}{\ensuremath{\mathbb{E}}\xspace}
\nc{\bbF}{\ensuremath{\mathbb{F}}\xspace}
\nc{\bbG}{\ensuremath{\mathbb{G}}\xspace}
\nc{\bbH}{\ensuremath{\mathbb{H}}\xspace}
\nc{\bbI}{\ensuremath{\mathbb{I}}\xspace}
\nc{\bbJ}{\ensuremath{\mathbb{J}}\xspace}
\nc{\bbK}{\ensuremath{\mathbb{K}}\xspace}
\nc{\bbL}{\ensuremath{\mathbb{L}}\xspace}
\nc{\bbM}{\ensuremath{\mathbb{M}}\xspace}
\nc{\bbN}{\ensuremath{\mathbb{N}}\xspace}
\nc{\bbO}{\ensuremath{\mathbb{O}}\xspace}
\nc{\bbP}{\ensuremath{\mathbb{P}}\xspace}
\nc{\bbQ}{\ensuremath{\mathbb{Q}}\xspace}
\nc{\bbR}{\ensuremath{\mathbb{R}}\xspace}
\nc{\bbS}{\ensuremath{\mathbb{S}}\xspace}
\nc{\bbT}{\ensuremath{\mathbb{T}}\xspace}
\nc{\bbU}{\ensuremath{\mathbb{U}}\xspace}
\nc{\bbV}{\ensuremath{\mathbb{V}}\xspace}
\nc{\bbW}{\ensuremath{\mathbb{W}}\xspace}
\nc{\bbX}{\ensuremath{\mathbb{X}}\xspace}
\nc{\bbY}{\ensuremath{\mathbb{Y}}\xspace}
\nc{\bbZ}{\ensuremath{\mathbb{Z}}\xspace}
\nc{\mrm}[1]{\ensuremath{\mathrm{#1}}\xspace}
\nc{\mbf}[1]{\ensuremath{\mathbf{#1}}\xspace}
\nc{\mcal}[1]{\ensuremath{\mathcal{#1}}\xspace}
\nc{\msc}[1]{\ensuremath{\mathscr{#1}}\xspace}
\renc{\bar}[1]{\overline{#1}}
\let\sectsign\S
\let\S\relax
\nc{\sub}{\subset}
\nc{\too}{\longrightarrow}
\nc{\hook}{\hookrightarrow}
\nc*{\hooklongrightarrow}{\ensuremath{\lhook\joinrel\relbar\joinrel\rightarrow}}
\nc{\hooklong}{\hooklongrightarrow}
\nc{\twoheadlongrightarrow}{\relbar\joinrel\twoheadrightarrow}
\nc{\shiso}{\approx}
\nc{\isoto}{\xrightarrow{\sim}}
\nc{\isofrom}{\xleftarrow{\sim}}
\renc{\ge}{\geqslant}
\renc{\le}{\leqslant}
\renc{\geq}{\geqslant}
\renc{\leq}{\leqslant}
\nc{\id}{\mathrm{id}}
\DeclareMathOperator{\Hom}{\mathrm{Hom}}
\nc{\uHom}{\underline{\smash{\Hom}}}
\DeclareMathOperator{\Maps}{\mathrm{Maps}}
\DeclareMathOperator{\Aut}{\mathrm{Aut}}
\DeclareMathOperator{\End}{\mathrm{End}}
\nc{\Pre}{\mathrm{PSh}{}}
\nc{\uEnd}{\underline{\smash{\End}}}
\renc{\lim}{\operatorname*{lim}}
\nc{\colim}{\operatorname*{colim}}
\nc{\Cofib}{\on{Cofib}}
\nc{\Fib}{\on{Fib}}
\nc{\initial}{\varnothing}
\nc{\op}{\mathrm{op}}
\DeclareMathOperator*{\fibprod}{\times}
\renc{\coprod}{\sqcup}
\nc{\bDelta}{\mbf{\Delta}}
\nc{\DM}{\mbf{DM}}
\nc{\eff}{\mathrm{eff}}
\nc{\veff}{\mathrm{veff}}
\nc{\cyc}{{\mrm{cyc}}}
\nc{\corr}{{\on{corr}}}
\nc{\ft}{\mrm{ft}}
\nc{\flf}{\mrm{flf}}
\nc{\fet}{{\mrm{f\acute et}}}
\nc{\fsyn}{{\mrm{fsyn}}}
\nc{\syn}{{\mrm{syn}}}
\nc{\lci}{{\mrm{lci}}}
\nc{\Perf}{\mbf{Perf}}
\nc{\perf}{\mrm{perf}}
\nc{\oblv}{\mrm{oblv}}
\nc{\exact}{\on{exact}}
\nc{\F}{{\on{F}}}
\nc{\clopen}{{\mrm{clopen}}}
\nc{\B}{\mrm{B}}
\nc{\D}{\mrm{D}}
\nc{\Fin}{\on{Fin}}
\nc{\Cut}{\on{Cut}}
\nc{\Cart}{\on{Cart}}
\nc{\pairs}{\mathsf{pairs}}
\nc{\Pairs}{\mathrm{Pair}}
\nc{\Trip}{\mathrm{Trip}}
\nc{\Lab}{\mathrm{Lab}}
\nc{\SL}{\mathrm{SL}}
\nc{\coCart}{\mathrm{coCart}}
\nc{\RKE}{\mathrm{RKE}}
\nc{\strict}{\mathrm{strict}}
\nc{\Emb}{\mathrm{Emb}}
\nc{\Split}{\mathrm{Split}}
\nc{\Set}{\mathrm{Set}}
\nc{\sSets}{\mathrm{sSets}}
\nc{\pb}{\mathrm{pb}}
\nc{\fib}{\mathrm{fib}}
\nc{\diff}{\mrm{diff}}
\nc{\gp}{\mrm{gp}}
\nc{\mgp}{\mrm{mot-gp}}
\nc{\FSyn}{\mrm{FSyn}}
\nc{\FEt}{\mrm{FEt}}
\nc{\Spc}{\mrm{Spc}}
\nc{\Ob}{\mrm{Ob}}
\nc{\Spt}{\mrm{Spt}}
\nc{\T}{\bT}
\nc{\suspinf}{\Sigma^\infty}
\nc{\h}{\mrm{h}}
\nc{\uhom}{\underline{\mathrm{Hom}}}
\nc{\umap}{\underline{\mathrm{Maps}}}
\renc{\H}{\bH}
\nc{\Einfty}{{\sE_\infty}}
\nc{\Eone}{{\sE_1}}
\nc{\Stab}{\mrm{Stab}}
\nc{\lax}{{\mrm{lax}}}
\nc{\cocart}{{\mrm{cocart}}}
\nc{\Sch}{\on{Sch}}
\nc{\Fr}{\on{Fr}}
\nc{\A}{\mathbf{A}}
\nc{\N}{\mathbf{N}}
\nc{\Z}{\mathbf{Z}}
\nc{\Q}{\mathbf{Q}}
\nc{\Oo}{\mathcal{O}} 
\nc{\Ll}{\mathcal{L}} 
\nc{\Mm}{\mathcal{M}} 
\nc{\mm}{\mathrm{m}} 
\nc{\K}{\mrm{K}} 
\nc{\W}{\mrm{W}} 
\nc{\red}{{\on{red}}}
\nc{\Voev}{{\on{Voev}}}
\nc{\Corr}{\mrm{Corr}}
\nc{\Span}{\mathbf{Corr}}
\nc{\Gap}{\mrm{Gap}}
\nc{\Corrfr}{\Corr^{\fr}}
\nc{\Corrvfr}{\Corr^{\Vfr}}
\nc{\Spec}{\on{Spec}}
\nc{\Sm}{\on{Sm}}
\nc{\Gm}{\mathbf{G}_{\on{m}}}
\renc{\P}{\bP}
\nc{\nis}{\mathrm{nis}}
\nc{\zar}{\mathrm{zar}}
\nc{\et}{\mathrm{\acute et}}
\nc{\all}{\mathrm{all}}
\nc{\fold}{\mathrm{fold}}
\nc{\Fun}{\mathrm{Fun}}
\nc{\Ho}{\mathrm{Ho}}
\nc{\Segal}{\mathrm{Segal}}
\nc{\Mon}{\mrm{Mon}{}}
\nc{\Ab}{\mrm{Ab}}
\nc{\Sh}{\on{Sh}}
\nc{\M}{\mrm{M}}
\nc{\Lhtp}{L_{\A^1}}
\nc{\Lmot}{L_{\mrm{mot}}}
\nc{\mot}{\mrm{mot}}
\nc{\SH}{\mbf{SH}}
\nc{\RR}{\mbf{R}}
\nc{\CC}{\mbf{C}}
\nc{\Mod}{\mbf{Mod}}
\nc{\QCoh}{\mbf{QCoh}}
\nc{\MonUnit}{\mbf{1}}
\nc{\tr}{\on{tr}}
\nc{\cotr}{\mrm{cotr}}
\nc{\vop}{\mrm{vop}}
\nc{\fr}{{\on{fr}}}
\nc{\Ar}{\mrm{Ar}}
\nc{\Vfr}{\on{Vfr}}
\nc{\frdiff}{{\on{frdiff}}}
\nc{\frGys}{\on{frGys}}
\nc{\SHfr}{\SH^{\fr}}
\nc{\SHfrdiff}{\SH^{\frdiff}}
\nc{\SHfrGys}{\SH^{\frGys}}
\nc{\InftyCat}{(\mathrm{\infty,1)\textnormal{-}Cat}}
\nc{\TriCat}{\mathrm{TriCat}}
\nc{\Cat}{\mathrm{1\textnormal{-}Cat}}
\nc{\Th}{\on{Th}}
\nc{\CMon}{\mrm{CMon}{}}
\nc{\MGL}{\mrm{MGL}}
\nc{\Seg}{\mrm{Seg}{}}
\nc{\GW}{\mrm{GW}{}}
\nc{\Tw}{\mrm{Tw}}
\nc{\sslash}{/\mkern-6mu/}
\nc{\PrL}{\mrm{Pr}^\mrm{L}}
\nc{\PrR}{\mrm{Pr}^\mrm{R}}
\nc{\pr}{\mrm{pr}}
\let\phi\varphi
\nc\efr{\mrm{efr}}
\nc\nfr{\mrm{nfr}}
\nc\dfr{\mrm{fr}}
\nc\tfr{\mrm{tfr}}
\nc\Vect{\mrm{Vect}}
\nc\sVect{\mrm{sVect}}
\nc{\fix}{\mrm{fix}}
\nc{\ho}{\mrm{h}}
\nc\Mfd{\mrm{Mfd}}
\nc{\PSh}{\mrm{PSh}}
\nc{\hzmw}{H \tilde\Z{}}
\nc{\Cor}{\mrm{Cor}{}}
\nc{\cormw}{\mrm{\widetilde{Cor}}{}}
\nc{\Chw}{\mrm{\widetilde{CH}}{}}
\nc{\Ex}{\mrm{Ex}}
\nc{\BM}{\mrm{BM}}
\let\setminus-
\nc{\Pic}{\mrm{Pic}}
\nc{\pur}{\mathfrak p}
\nc{\angles}[1]{\langle #1\rangle}
\nc{\inv}[1]{[\tfrac{1}{#1}]}
\nc{\pinv}{\inv{p}}
\nc{\cinv}{\inv{p}}
\nc{\Sph}{\on{Sph}}
\nc{\cdh}{\mrm{cdh}}
\nc{\KGL}{\mrm{KGL}}
\nc{\KH}{\mrm{KH}}
\nc{\Flag}{\mrm{Flag}}
\nc{\inftyCat}{\term{$\infty$-category}}
\nc{\inftyCats}{\term{$\infty$-categories}}
\nc{\inftyOneCat}{\term{$(\infty,1)$-category}}
\nc{\inftyOneCats}{\term{$(\infty,1)$-categories}}
\nc{\inftyGrpd}{\term{$\infty$-groupoid}}
\nc{\inftyGrpds}{\term{$\infty$-groupoids}}
\nc{\inftyTop}{\term{$\infty$-topos}}
\nc{\inftyTops}{\term{$\infty$-toposes}}
\nc{\inftyTwoCat}{\term{$(\infty,2)$-category}}
\nc{\inftyTwoCats}{\term{$(\infty,2)$-categories}}
\title{Perfection in motivic~homotopy~theory}
\subjclass[2010]{Primary 14F42; Secondary 19E08}
\author[E. Elmanto]{Elden Elmanto}
\address{K\o benhavns Universitet\\
Institut for Matematiske Fag\\
Universitetsparken 5\
2100 K\o benhavn\\
Denmark}
\email{\href{mailto:elmanto@math.ku.dk}{elmanto@math.ku.dk}}
\urladdr{\url{https://www.eldenelmanto.com/}}
\author[A. A. Khan]{Adeel A. Khan}
\address{Fakultät für Mathematik\\
Universität Regensburg\\
Universitätsstr. 31\\
93040 Regensburg\\
Germany}
\email{\href{mailto:adeel.khan@mathematik.uni-regensburg.de}{adeel.khan@mathematik.uni-regensburg.de}}
\urladdr{\url{https://www.preschema.com}}
\date{\today}
\begin{document}

\begin{abstract}
We prove a topological invariance statement for the Morel--Voevodsky motivic homotopy category, up to inverting exponential characteristics of residue fields.
This implies in particular that $\SH\pinv$ of characteristic $p>0$ schemes is invariant under passing to perfections.
Among other applications we prove Grothendieck--Verdier duality in this context.
\end{abstract}

\maketitle

\parskip 0pt
\tableofcontents

\parskip 0.2cm

\changelocaltocdepth{1}

\section{Introduction}

Let $S$ be a scheme and denote by $S_\et$ its small étale topos.
The starting point for this note is Grothendieck's ``\'equivalence remarquable de cat\'egories'' \cite[Th\'eor\`eme 18.1.2]{EGAIV4}, which asserts that for any nil-immersion $f : S_0 \hook S$, there is an induced equivalence
  \begin{equation*}
    f^* : S_\et \to (S_0)_\et.
  \end{equation*}
In fact, Grothendieck further generalized this to a \emph{topological invariance} statement for the small étale topos: for any universal homeomorphism of schemes $f : T \to S$, the functor $f^* : S_\et \to T_\et$ is an equivalence (see \cite[Exposé~IX,~Théorème~4.10]{SGA1}, \cite[Exposé~VIII,~Théorème~1.1]{SGA4}).

The large étale topos fails to satisfy nil-invariance.
An observation of Morel and Voevodsky \cite{MV} was that this failure can be repaired by working in the setting of \emph{$\A^1$-invariant} sheaves.
Indeed, it is a consequence of the Morel--Voevodsky localization theorem that the stable motivic homotopy category $\SH$ satisfies nil-invariance (see e.g. \cite[Proposition~2.3.6(1)]{CD}).
However, the topological invariance property still fails, at least in positive characteristic (see \remref{rem:warn}).
Our goal in this paper is to show that topological invariance is in fact true for $\SH$, after inverting the exponential characteristic of the base field (\thmref{thm:radicial invariance}).
This also recovers the analogous statement in other related contexts, such as various variants of mixed motives \cite{AyoubEtale,CD,CDintegral,CDetale} (see \remref{rem:motivic category}).

A particularly useful consequence of topological invariance is that, for any scheme $S$ of characteristic $p$, $\SH(S)\pinv$ is invariant under passing to the perfection $S_\perf$ (\corref{cor:perfection invariance}).
This allows us to remove perfectness hypotheses on the base field in many results, see \ssecref{ssec:non-perfect}.
It also yields a Grothendieck--Verdier duality statement, following ideas of Cisinski--Déglise \cite{CDintegral} (\thmref{thm:duality}).

\ssec{Conventions}

\sssec{}
All schemes will implicitly be assumed to be quasi-compact quasi-separated.

Recall that a morphism of schemes $f : X \to Y$ is a \emph{universal homeomorphism} if it induces a homeomorphism on underlying topological spaces after any base change, or equivalently, if it is integral, universally injective, and surjective \cite[Corollary~18.12.11]{EGAIV4}.

\sssec{}
If $S$ is a scheme of characteristic $p>0$, i.e., an $\bF_p$-scheme, we write $F_S : S \to S$ for the Frobenius endomorphism \cite[Exposé~XIV=XV]{SGA5}.
Recall that $S$ is \emph{perfect} if the Frobenius $F_S:S \rightarrow S$ is an isomorphism.
Any $\bF_p$-scheme $S$ admits a \emph{perfection} $S_\perf$, defined as the limit of the tower
  \begin{equation*}
    \cdots \xrightarrow{F_S} S \xrightarrow{F_S} S,
  \end{equation*}
see \cite[Section~3]{bhatt-scholze-proj}.

\sssec{}
Given a scheme $S$, we denote by $\SH(S)$ the stable \inftyCat of motivic spectra over $S$.
We will use the language of six operations, see \cite[Appendix~C]{HoyoisGLV} or \cite{KhanThesis} for the non-noetherian setting.
Any motivic spectrum $E \in \SH(S)$ represents a cohomology theory on $S$-schemes, given by the formula
  \begin{equation*}
    E(X, \xi) = \Maps_{\SH(S)}(\MonUnit_S, f_*\Sigma^\xi f^*(E))
  \end{equation*}
for any morphism $f : X \to S$ and any K-theory class $\xi \in \K(X)$.
Similarly, there is a Borel--Moore homology theory
  \begin{equation*}
    E^\BM(X/S, \xi) = \Maps_{\SH(S)}(\MonUnit_S, f_*\Sigma^{-\xi} f^!(E)).
  \end{equation*}
We refer to \cite{DJK} for details.

\ssec{Acknowledgements}
We would like to thank Denis-Charles Cisinski, Frédéric Déglise, Marc Hoyois, and Fangzhou Jin for useful suggestions and conversations on the subject of this paper. We would also like to thank the Center for Symmetry and Deformation at the University of Copenhagen for supporting AK's visit during which this paper was conceived. EE would like to thank a couple of cheeky colleagues for suggesting the title.

\changelocaltocdepth{2}


\section{Topological invariance}
\label{sec:perfinv}

\ssec{Main result and corollaries}

Let $\sP$ be a set of prime numbers.
For a scheme $S$, we denote by $\SH(S)[\sP^{-1}]$ the localization of $\SH(S)$ at the morphisms $p : E \to E$, for $E \in \SH(S)$ and $p \in \sP$.
When $\sP$ contains a single prime $p$, we write simply $\SH(S)\pinv$.

\begin{thm}\label{thm:radicial invariance}
Let $S$ be a scheme and $\sP$ a set of prime numbers.
Suppose that every prime $q \notin \sP$ is invertible in $\sO_S$.
Then for any universal homeomorphism $f : T \to S$, the functor
  \[ f^*: \SH(S)[\sP^{-1}] \rightarrow \SH(T)[\sP^{-1}] \]
is an equivalence.
\end{thm}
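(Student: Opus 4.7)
My plan is to prove the theorem by a cascade of reductions, with the heart of the argument being an invertibility statement for Frobenius.

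First, I would reduce to the case where $f$ is a \emph{finite} universal homeomorphism. Any integral morphism between qcqs schemes is a cofiltered limit of finite morphisms of finite presentation along affine transition maps, and $\SH$ is continuous under such limits. Next, the Morel--Voevodsky localization theorem implies that pullback along any nilimmersion (a closed immersion with empty open complement) is an equivalence. Applied to $S_\red \hookrightarrow S$ and $T_\red \hookrightarrow T$, this reduces us to the case where $S$ and $T$ are reduced; then $f$ is a finite, surjective, radicial morphism.

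A key structural observation is that the diagonal $\Delta : T \hookrightarrow T \times_S T$ is a nilimmersion. It is a closed immersion (as $f$ is separated) and a section of the projection $p_1 : T \times_S T \to T$, which is a universal homeomorphism as a base change of $f$; hence $\Delta$ itself is a universal homeomorphism, so a nilimmersion. Therefore $\Delta^*$ is an equivalence and the two pullbacks $p_1^*, p_2^* : \SH(T)[\sP^{-1}] \to \SH(T \times_S T)[\sP^{-1}]$ agree. If one knew that $\SH[\sP^{-1}]$ satisfies effective descent along $f$, the associated \v Cech cosimplicial diagram would collapse to a constant one and the equivalence $\SH(S)[\sP^{-1}] \isoto \SH(T)[\sP^{-1}]$ would follow.

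The principal step is therefore the positive-characteristic case, from which descent will follow. For $S$ a reduced $\bF_p$-scheme (with $p \in \sP$, since $p = 0$ in $\sO_S$) and $f : T \to S$ finite surjective radicial, classical algebra provides, for some $n \geq 1$, a map $g : S \to T$ with $f \circ g = F_S^n$ and $g \circ f = F_T^n$: concretely, $b \mapsto b^{p^n}$ lands in the subring corresponding to $f$. Pulling back, $g^* f^* = (F_S^n)^*$ and $f^* g^* = (F_T^n)^*$, so the equivalence of $f^*$ reduces to the equivalence of $F_S^*$ on $\SH(S)\pinv$. Proving this Frobenius invertibility is, I expect, the main obstacle; the argument should combine fppf/Nisnevich-local reductions with the fact that Frobenius acts by a power of $p$ on motivic invariants of smooth varieties, so becomes invertible after inverting $p$. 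The general (possibly mixed-characteristic) case then reduces to the positive-characteristic case by a Nisnevich-local gluing: over points of residue characteristic $0$, any finite radicial morphism between reduced schemes is already an isomorphism (characteristic-$0$ fields admit no nontrivial purely inseparable extensions), so the nontrivial content is concentrated on the characteristic-$p$ loci, where the previous step applies.
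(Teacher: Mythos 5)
Your reduction strategy is sound in outline and overlaps with the paper's proof at several points (localization to reduce to reduced schemes, the observation that the diagonal $\Delta : T \to T\fibprod_S T$ is a nilimmersion, reduction of the positive characteristic case to the Frobenius via the factorization $f\circ g = F_S^n$, $g\circ f = F_T^n$). However, the proposal has a genuine gap exactly where the content of the theorem lives, together with two structural problems in the surrounding reductions.

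\emph{The central gap.} You write that the main obstacle is showing $F_S^*$ is invertible and that the argument ``should combine fppf/Nisnevich-local reductions with the fact that Frobenius acts by a power of $p$ on motivic invariants.'' That is not a fact that one can invoke. In $\SH(k)$ the endomorphisms of the unit are $\GW(k)$, not $\bZ$, and the Frobenius does \emph{not} act there by the integer $p^n$. The naive trace along the Frobenius is identically zero (the extension is inseparable, so the ring-theoretic trace vanishes), so there is no obvious ``degree $p$'' transfer. What one actually needs is: (a) a \emph{construction} of a trace transformation $\tr_f : f_*f^* \to \id$ for the finite syntomic morphism $f$, using the explicit framing coming from the presentation of $T \subset \A^1_S$ by a monic polynomial; (b) the identification of $\tr_f\circ\mathrm{unit}$ with multiplication by the quadratic degree $q_\epsilon = 1+\angles{-1}+1+\cdots \in \GW(k)$, which is the content of the framed-correspondence computation in \cite[Proposition~B.1.4]{EHKSY1}; and (c) a verification that $q_\epsilon$ becomes a unit in $\GW(k)\pinv$, which is not formal: one checks it separately in $\bZ\pinv$ (trivial) and in $\W(k)\pinv$, where the key point is that $q_\epsilon=1$ in $\W(k)$ when $p$ is odd (because $h=0$ there), and that $\W(k)$ is $2$-torsion when $p=2$. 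Your sketch contains none of (a)--(c), and without them the argument does not close.

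\emph{The descent digression is circular.} Effective descent for $\SH[\sP^{-1}]$ along a universal homeomorphism $f$ is essentially equivalent to what you are trying to prove (once the co-unit is handled); conditioning on it does not advance the argument. The paper does use the nilimmersion property of $\Delta$, but only to prove full faithfulness of $f_*$ (i.e.\ invertibility of the co-unit $f^*f_*\to\id$), which holds unconditionally and does \emph{not} require inverting any primes; the unit map is where the real work is, and there the diagonal trick gives nothing.

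\emph{Two smaller issues.} First, your opening reduction---writing a general universal homeomorphism as a cofiltered limit of finite ones and invoking continuity---is not justified as stated: the finite sub-morphisms $T_i\to S$ appearing in such a presentation need not themselves be universal homeomorphisms, so you cannot simply pass the property down the tower. The paper instead first reduces (by noetherian approximation, henselization, and induction on dimension via the localization theorem) to the case where $S$ is the spectrum of a field, and only \emph{then} uses continuity to reduce to a finite purely inseparable extension, where intermediate subextensions automatically remain purely inseparable. Second, the ``Nisnevich-local gluing'' for mixed characteristic needs to be made precise: the characteristic-$0$ and characteristic-$p$ loci of, say, $\Spec(\bZ_{(p)})$ are not disjoint, so one cannot literally glue; the correct mechanism is the joint conservativity of $(i^*, j^*)$ for a closed point and its open complement, plus induction on dimension.
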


\begin{rem}\label{rem:motivic category}
The proof of \thmref{thm:radicial invariance} will in fact apply to any motivic \inftyCat of coefficients $\bD$ as in \cite[Chap.~2,~Definition~3.5.2]{KhanThesis}.
See \remref{rem:motivic category proof} for details.
\end{rem}

We now record some immediate consequences.
Taking $\sP$ to be the set of all primes, we get:

\begin{cor} \label{cor:q}
For any universal homeomorphism $f : T \to S$, the functor 
\[
f^*: \SH(S)_{\bQ} \rightarrow \SH(T)_{\bQ}
\]
is an equivalence.
\end{cor}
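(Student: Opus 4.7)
The plan is to deduce this as an essentially immediate consequence of \thmref{thm:radicial invariance}, applied with $\sP$ chosen to be the set of all prime numbers.

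First I would check the hypothesis of the theorem. With $\sP$ equal to the set of all primes, the condition ``every prime $q \notin \sP$ is invertible in $\sO_S$'' is vacuous, since there are no such $q$. Thus \thmref{thm:radicial invariance} applies for any scheme $S$ and any universal homeomorphism $f : T \to S$, yielding an equivalence
  \[ f^* : \SH(S)[\sP^{-1}] \isoto \SH(T)[\sP^{-1}]. \]

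Second, I would identify $\SH({-})[\sP^{-1}]$ with the rationalization $\SH({-})_{\bQ}$ when $\sP$ is the set of all primes. By the definition given just before \thmref{thm:radicial invariance}, $\SH(S)[\sP^{-1}]$ is the localization of the presentable stable \inftyCat $\SH(S)$ at the collection of multiplication-by-$p$ maps $p : E \to E$ for all $E \in \SH(S)$ and all primes $p$. Inverting all such maps is a smashing localization that coincides with the usual rationalization: an object lies in the local subcategory precisely when its homotopy presheaves of abelian groups are uniquely $p$-divisible for every prime $p$, i.e., are $\bQ$-vector spaces. Equivalently, the local subcategory is that of modules over the rationalized motivic sphere. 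Since $f^*$ is symmetric monoidal and commutes with colimits, it preserves these smashing localizations, so the equivalence above restricts to the desired equivalence $f^* : \SH(S)_{\bQ} \isoto \SH(T)_{\bQ}$.

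There is no real obstacle here: the content is entirely carried by \thmref{thm:radicial invariance}, and the only supplementary input is the standard identification of the localization at all multiplication-by-$p$ maps with rationalization in a presentable stable \inftyCat, which is routine.
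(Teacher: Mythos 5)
Your proposal is correct and matches the paper's argument exactly: the paper derives this corollary by the single observation ``Taking $\sP$ to be the set of all primes,'' leaving the identification of $\SH(-)[\sP^{-1}]$ with $\SH(-)_{\bQ}$ implicit. The extra detail you supply about the smashing localization is a reasonable elaboration of the same route.
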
 

\begin{rem}
Recall that for every scheme $S$, the category $\SH(S)_\bQ$ admits natural splittings
  \begin{equation*}
    \SH(S)_\bQ \simeq \SH(S)_{\bQ,+} \times \SH(S)_{\bQ,-},
  \end{equation*}
see \cite[Sect.~16.2]{CD}.
The analogue of \corref{cor:q} is known for the plus part $\SH(S)_{\bQ,+}$, via the identification with Beilinson motives (see Theorems 14.3.3 and 16.2.13 in \emph{op. cit.}).
For the minus part, the statement appears to be new.
\end{rem}

Taking $\sP$ to be a single prime, we get:

\begin{cor}\label{cor:univ homeo invariance}
Let $S$ be a scheme of exponential characteristic $p$.
Then for any universal homeomorphism $f : T \to S$, the functor
  \[ f^*: \SH(S)\pinv \rightarrow \SH(T)\pinv \]
is an equivalence.
\end{cor}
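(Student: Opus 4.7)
The statement is an immediate specialization of \thmref{thm:radicial invariance}. The plan is to apply that theorem with an appropriate choice of $\sP$ and verify its hypothesis.

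Specifically, I would take $\sP = \{p\}$ when the exponential characteristic $p$ is a prime, and $\sP = \varnothing$ when $p = 1$. In either case the localization $\SH(S)[\sP^{-1}]$ coincides with $\SH(S)\pinv$: in the first case by definition, and in the second case because inverting $1$ is vacuous, since $1 : E \to E$ is already an equivalence for every $E \in \SH(S)$.

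It then remains to verify the hypothesis of \thmref{thm:radicial invariance}, namely that every prime $q \notin \sP$ is invertible in $\sO_S$. If $p > 1$, then by assumption $S$ is an $\bF_p$-scheme, so any prime $q \neq p$ is automatically a unit in $\bF_p$, hence in $\sO_S$. If $p = 1$, then $S$ is a $\bQ$-scheme and every prime is invertible in $\sO_S$, matching the requirement with $\sP = \varnothing$. In either case the theorem applies directly and yields the desired equivalence $f^*: \SH(S)\pinv \to \SH(T)\pinv$.

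There is no genuine obstacle here; all of the content sits in \thmref{thm:radicial invariance} itself. The only thing to get right is the routine case split on whether $p = 1$ or $p > 1$, together with the observation that assuming all residue fields have exponential characteristic $p$ forces every other prime to be a unit in the structure sheaf.
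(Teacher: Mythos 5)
Your proposal is correct and follows exactly the paper's route: the corollary is obtained by specializing \thmref{thm:radicial invariance} to $\sP=\{p\}$ (the paper phrases this tersely as ``taking $\sP$ to be a single prime''). Your explicit handling of the $p=1$ case via $\sP=\varnothing$ is a small but welcome precision that the paper elides, since $1$ is not a prime and inverting it is vacuous.
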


\begin{cor}\label{cor:Frob invariance}
For every scheme $S$ of characteristic $p>0$, the absolute Frobenius induces an equivalence
  \begin{equation*}
    F_S^* : \SH(S)\pinv \to \SH(S)\pinv.
  \end{equation*}
\end{cor}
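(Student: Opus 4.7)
The plan is to deduce this statement as an immediate specialization of the preceding corollary, taking $T = S$ and $f = F_S$. Since $S$ has characteristic $p$, its exponential characteristic is $p$, so the hypotheses of that corollary will be satisfied once we verify that the absolute Frobenius $F_S : S \to S$ is a universal homeomorphism. The desired equivalence $F_S^* : \SH(S)\pinv \to \SH(S)\pinv$ then follows with no further work.

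The only ingredient to check is therefore the classical fact that $F_S$ is a universal homeomorphism. Using the characterization recalled in the Conventions (integral, universally injective, and surjective), this reduces on affine opens $\Spec(A) \sub S$ to the three corresponding properties of the ring map $F_A : A \to A$, $a \mapsto a^p$. Integrality is immediate: every $a \in A$ is a root of the monic polynomial $x^p - F_A(a) \in F_A(A)[x]$. Universal injectivity, i.e.\ radiciality, follows from the fact that $F_A$ induces a purely inseparable extension on every residue field. Surjectivity on spectra is a consequence of the identity $F_A^{-1}(\mathfrak{p}) = \mathfrak{p}$ for every prime $\mathfrak{p} \sub A$ (since $a^p \in \mathfrak{p}$ iff $a \in \mathfrak{p}$), which in fact shows that $F_S$ acts as the identity on underlying topological spaces.

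There is no real obstacle here: the corollary is a direct specialization of the preceding one, with the only input being the standard observation that Frobenius is a universal homeomorphism on $\bF_p$-schemes.
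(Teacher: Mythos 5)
Your proposal is correct and takes exactly the approach the paper intends: the paper gives no explicit proof of this corollary, presenting it as an immediate specialization of the preceding one, and you simply make explicit the standard verification (integral, radicial, surjective on spectra) that the absolute Frobenius is a universal homeomorphism.
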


\begin{cor} \label{cor:perfection invariance}
For every scheme $S$ of characteristic $p>0$, the canonical morphism $S_\perf \to S$ induces an equivalence
  \begin{equation*}
    \SH(S)\pinv \rightarrow \SH(S_{\perf})\pinv.
  \end{equation*}
\end{cor}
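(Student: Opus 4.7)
The plan is to obtain Corollary \ref{cor:perfection invariance} from Corollary \ref{cor:Frob invariance} by a continuity (i.e., pro-base-change) argument, since $S_\perf \to S$ is not itself a universal homeomorphism in general but only a cofiltered limit of such.

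First, I recall that by construction $S_\perf = \lim_{n} S$, where the limit is taken over the tower
\[
  \cdots \xrightarrow{F_S} S \xrightarrow{F_S} S
\]
with transition maps given by the absolute Frobenius. These transition maps are affine (in fact integral), and all the schemes in the tower are qcqs, so the hypotheses of the standard continuity theorem for $\SH$ along cofiltered limits of schemes with affine transition maps apply; see e.g.\ \cite[Appendix~C]{HoyoisGLV} or \cite{KhanThesis}. Hence the pullback functors assemble into an equivalence in $\PrL$
\[
  \SH(S_\perf) \isoto \colim_{n} \SH(S),
\]
where the colimit is formed along the functors $F_S^* : \SH(S) \to \SH(S)$.

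Next I invert $p$. Since localization at a set of morphisms is a left adjoint and left adjoints commute with colimits in $\PrL$, the displayed equivalence descends to an equivalence
\[
  \SH(S_\perf)\pinv \isoto \colim_{n} \SH(S)\pinv,
\]
with transition maps still given by $F_S^*$.

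Finally, by Corollary \ref{cor:Frob invariance}, each transition map $F_S^* : \SH(S)\pinv \to \SH(S)\pinv$ in this sequential diagram is an equivalence. A sequential colimit in which every transition map is an equivalence is canonically identified with any of its terms, so the composite
\[
  \SH(S)\pinv \too \colim_{n} \SH(S)\pinv \isofrom \SH(S_\perf)\pinv
\]
is an equivalence, and by construction agrees with the pullback along the canonical morphism $S_\perf \to S$. I do not expect any step here to be a real obstacle: the only nontrivial input beyond Corollary \ref{cor:Frob invariance} is the continuity of $\SH$ along affine cofiltered limits, which is already invoked throughout the six-functor framework cited in the conventions.
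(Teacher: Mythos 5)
Your argument is exactly the paper's: the proof there says simply that the corollary ``follows from Corollary \ref{cor:Frob invariance} in view of continuity of $\SH$'' citing \cite[Proposition~C.12(4)]{HoyoisGLV}, and what you have written is the standard unpacking of that one-line proof (identify $\SH(S_\perf)$ with the sequential colimit of $\SH(S)$ along $F_S^*$, invert $p$ using that localization commutes with colimits in $\PrL$, and observe that each transition map is then an equivalence). Correct, same approach.
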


\begin{proof}
Follows from \corref{cor:Frob invariance} in view of continuity of $\SH$ \cite[Proposition~C.12(4)]{HoyoisGLV}.
\end{proof}

\sssec{}

At the level of cohomology and Borel--Moore homology, we have the following reformulation (we consider the case $\sP=\{p\}$ for simplicity):

\begin{cor}\label{cor:cohomology}
Let $S$ be a scheme of exponential characteristic $p$.
Let $E \in \SH(S)$ be a motivic spectrum over $S$.
Then we have:
\begin{thmlist}
  \item\label{item:cohomology/radicial}
For any universal homeomorphism $f : X \to Y$ of $S$-schemes, the induced maps
  \begin{align*}
    f^* : E(Y, \xi)\pinv \to E(X, f^*(\xi))\pinv\\
    f_* : E^{\BM}(X, f^*(\xi))\pinv \to E^{\BM}(Y, \xi)\pinv
  \end{align*}
are equivalences for every $\xi \in \K(Y)$.
  \item\label{item:cohomology/perfection}
The canonical morphism $f : S_\perf \to S$ induces equivalences
  \begin{align*}
    f^* : E(S, \xi)\pinv \to E(S_\perf, f^*(\xi))\pinv\\
    f_* : E^{\BM}(S_\perf, f^*(\xi))\pinv \to E^{\BM}(S, \xi)\pinv
  \end{align*}
for every $\xi \in \K(S)$.
\end{thmlist}
\end{cor}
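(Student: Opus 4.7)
The plan is to deduce both parts from \thmref{thm:radicial invariance} (for part \itemref{item:cohomology/radicial}) and \corref{cor:perfection invariance} (for part \itemref{item:cohomology/perfection}) by unwinding the definitions of $E(-,\xi)$ and $E^{\BM}(-,\xi)$ into mapping spectra in $\SH(Y)$ and $\SH(X)$, then invoking fully faithfulness of $f^*$ on the $p$-inverted categories together with compactness of the unit $\MonUnit$ in $\SH$.

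For the cohomology statement in part \itemref{item:cohomology/radicial}, I would write $g : Y \to S$ for the structure morphism and use the $(g^*, g_*)$- and $(f^*,f_*)$-adjunctions, together with compatibility of the Thom twist with pullback, to identify
\[
  E(Y,\xi) \simeq \Maps_{\SH(Y)}(\MonUnit_Y, \Sigma^\xi g^*E) \quad\text{and}\quad E(X, f^*\xi) \simeq \Maps_{\SH(X)}(\MonUnit_X, f^*\Sigma^\xi g^*E),
\]
under which the comparison map is the one induced by the functor $f^*$. By \thmref{thm:radicial invariance} applied to the universal homeomorphism $f$ (the hypothesis on residue characteristics is inherited from $S$, since $Y$ is an $S$-scheme of exponential characteristic $p$), $f^* : \SH(Y)\pinv \to \SH(X)\pinv$ is an equivalence and in particular fully faithful. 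Because $\MonUnit_Y$ and $\MonUnit_X$ are compact in $\SH$, $p$-inversion commutes with the relevant mapping spectra, yielding the claim.

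For the Borel--Moore statement in part \itemref{item:cohomology/radicial}, the universal homeomorphism $f$ is integral, so $f_! \simeq f_*$ in the six-functor formalism. Combining this with the $(f_!, f^!)$- and $(g^*, g_*)$-adjunctions and the identity $\Sigma^{-f^*\xi} f^! \simeq f^! \Sigma^{-\xi}$, one rewrites
\[
  E^{\BM}(X, f^*\xi) \simeq \Maps_{\SH(Y)}(f_*\MonUnit_X, \Sigma^{-\xi} g^!E) \quad\text{and}\quad E^{\BM}(Y, \xi) \simeq \Maps_{\SH(Y)}(\MonUnit_Y, \Sigma^{-\xi} g^!E),
\]
under which $f_*$ becomes precomposition with the unit $\MonUnit_Y \to f_*f^*\MonUnit_Y \simeq f_*\MonUnit_X$ of the $(f^*, f_*)$-adjunction. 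Since this adjunction becomes an adjoint equivalence after inverting $p$ by \thmref{thm:radicial invariance}, the unit is an equivalence on $\SH(Y)\pinv$, and the same compactness argument concludes.

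Part \itemref{item:cohomology/perfection} is obtained by applying the identical arguments to $f : S_\perf \to S$ and replacing \thmref{thm:radicial invariance} with \corref{cor:perfection invariance}. The main subtlety to check is the identification $f_! \simeq f_*$ for the (possibly non-finite-type) universal homeomorphism $f$, which is part of the standard six-functor formalism of \cite{KhanThesis}; compactness of the unit and the resulting commutation of $p$-inversion with mapping spectra are routine.
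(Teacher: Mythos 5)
Your proof is correct and follows essentially the same approach as the paper: unwind the cohomology and Borel--Moore groups into mapping spectra, identify the comparison maps with (co)unit natural transformations of the relevant adjunctions, invoke \thmref{thm:radicial invariance} (resp.\ \corref{cor:perfection invariance}) to see these become invertible after inverting $p$, and use compactness of the unit object to commute $p$-inversion past the mapping spectra. The paper packages this by working entirely with functors to $\SH(S)\pinv$ (so that only compactness of $\MonUnit_S$ is ever invoked, and the Borel--Moore map is directly the co-unit $f_*f^!\to\id$), whereas your reformulation in $\SH(Y)$ via precomposition with $\MonUnit_Y\to f_*\MonUnit_X$ additionally uses compactness of $f_*\MonUnit_X$---harmless for part~\itemref{item:cohomology/radicial}, but worth noting that for part~\itemref{item:cohomology/perfection} the paper's formulation avoids having to justify compactness of $f_*\MonUnit_{S_\perf}$ for the non-finite-type morphism $S_\perf\to S$.
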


\begin{proof}
Note that we have canonical identifications
  \begin{equation*}
    E(T, \psi)\pinv
      \simeq \Maps_{\SH(S)}(\1_S, (\pi_T)_*\Sigma^\psi \pi_T^*(E))\pinv
      \simeq \Maps_{\SH(S)\pinv}(\1_S, (\pi_T)_*\Sigma^\psi \pi_T^*(E))
  \end{equation*}
for every $\pi_T : T \to S$ and $\psi\in\K(T)$.
Therefore the map on cohomology spaces is induced from the natural transformation
  \begin{equation*}
      (\pi_Y)_*\Sigma^\xi \pi_Y^*
        \to (\pi_Y)_*\Sigma^\xi f_*f^* \pi_Y^*
        \simeq (\pi_Y)_* f_*\Sigma^{f^*(\xi)} f^* \pi_Y^*
        \simeq (\pi_X)_* \Sigma^{f^*(\xi)} \pi_X^*.
    \end{equation*}
For $f$ as in \itemref{item:cohomology/radicial} (resp. \itemref{item:cohomology/perfection}), the unit map $\id \to f_*f^*$ is invertible after inverting $p$ by \thmref{thm:radicial invariance} (resp. by \corref{cor:perfection invariance}), whence the claim.
The proof for Borel--Moore homology is similar, using the fact that the co-unit map $f_*f^! \to \id$ is also invertible in both cases (after inverting $p$).
\end{proof}

\begin{rem}
\corref{cor:cohomology} also holds for the compactly supported variants (cohomology with compact support and relative homology), with the same proofs.
\end{rem}

\begin{exam}\label{exam:perfection invariance in K}
Let $\KGL \in \SH(\bF_p)$ denote the homotopy invariant K-theory spectrum over $\Spec(\bF_p)$.
For every (possibly singular and non-noetherian) $\bF_p$-scheme $S$, we have functorial equivalences
  \begin{equation*}
    \KGL(S, 0)\pinv \simeq \K(S)\pinv
  \end{equation*}
by \cite[Theorem~2.20]{Cisinski} and \cite[Exercise~9.11(h)]{TT}.
Under these identifications, \corref{cor:cohomology} recovers in particular the recent observation of Kelly and Morrow \cite[Lemma~4.1]{kelly-morrow} that the canonical map
  \begin{equation*}
    \K(S)\pinv \to \K(S_\perf)\pinv
  \end{equation*}
is an equivalence.
\end{exam}

\begin{rem} \label{rem:warn}
\corref{cor:univ homeo invariance} is \emph{false} before inverting $p$.
Indeed, let $k$ be a field such that the Frobenius induces an equivalence $F^* : \SH(k) \to \SH(k)$.
Then as in \corref{cor:cohomology}, the induced map on algebraic K-theory spectra
  \begin{equation*}
    F^* : \K(k) \to \K(k)
  \end{equation*}
is also an equivalence.
But under the canonical identification $\K_1(k) \simeq k^\times$, the induced endomorphism of $k^\times$ is $x \mapsto x^p$, which is an isomorphism if and only if $k$ is perfect.
\end{rem}

\begin{rem}
Under the assumptions of \thmref{thm:radicial invariance}, suppose further that $f$ is of finite type (hence a finite radicial surjection).
Then the equivalence $f^* : \SH(S)[\sP^{-1}] \to \SH(T)[\sP^{-1}]$ is quasi-inverse to $f_* \simeq f_!$.
Hence the left adjoint and right adjoints of the latter functor are equivalent.
That is, we have an equivalence of functors 
\[
f^* \simeq f^!:  \SH(S)[\sP^{-1}] \rightarrow \SH(T)[\sP^{-1}].
\]
\end{rem}

\ssec{Proof of \thmref{thm:radicial invariance}}

In order to prove \thmref{thm:radicial invariance}, we have to show that the unit and co-unit maps
  \begin{equation*}
    \id \to f_*f^*,
    \qquad
    f^*f_* \to \id
  \end{equation*}
are both invertible.
For the latter, this turns out to be the case before inverting $p$:

\begin{prop} \label{prop:co-unit invertible}
For any universal homeomorphism $f : T \to S$, the co-unit transformation
  \[ f^*f_* \to \id_{\SH(T)} \]
is invertible.
In other words, the functor $f_* : \SH(T) \to \SH(S)$ is fully faithful.
\end{prop}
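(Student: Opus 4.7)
The key observation is that, because $f$ is both universally injective and separated (integral morphisms being affine), the diagonal $\Delta : T \to T \times_S T$ is a surjective closed immersion, i.e., its open complement is empty. Consequently, the Morel--Voevodsky localization recollement attached to $\Delta$ degenerates, yielding that the unit $\id \to \Delta_* \Delta^*$ and co-unit $\Delta^* \Delta_* \to \id$ are both equivalences; equivalently, $\Delta^*$ and $\Delta_*$ are mutually inverse equivalences between $\SH(T \times_S T)$ and $\SH(T)$.

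I would first reduce to the case where $f$ is finite. Writing $T \simeq \lim_i T_i$ as a cofiltered limit of finite universal homeomorphisms $T_i \to S$ with affine transition maps, continuity of $\SH$ \cite[Proposition~C.12(4)]{HoyoisGLV} provides $\SH(T) \simeq \colim_i \SH(T_i)$, and a standard argument then descends the invertibility of the co-unit $f^* f_* \to \id$ to each finite level. Assuming $f$ is finite, and hence proper, proper base change applied to the fiber square with projections $p_1, p_2 : T \times_S T \to T$ supplies a canonical equivalence
\[
f^* f_* \simeq (p_1)_* p_2^*
\]
compatible with the co-unit. Substituting the identification $\id \simeq \Delta_* \Delta^*$ above and using $p_1 \circ \Delta = p_2 \circ \Delta = \id_T$ gives
\[
(p_1)_* p_2^* \simeq (p_1)_* \Delta_* \Delta^* p_2^* \simeq (p_1 \Delta)_* (p_2 \Delta)^* \simeq \id_{\SH(T)},
\]
as desired.

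The main obstacle I anticipate is the reduction to finite $f$. One must check that $f_*$ preserves the relevant filtered colimits (plausible since integral morphisms are quasi-compact) and that a generating class of objects in $\SH(T)$ descends via pullback to some finite level $\SH(T_i)$, so that the invertibility of the co-unit for $f$ is genuinely controlled by the invertibility of the co-units for the $f_i : T_i \to S$. Once this is in hand, the remainder of the argument is entirely formal: proper base change combined with Morel--Voevodsky localization applied to the diagonal does the job, and in particular no hypothesis on the characteristic is required.
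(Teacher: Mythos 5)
Your approach is essentially the same as the paper's: both rest on proper base change (applied to the square $T \times_S T \rightrightarrows T \to S$) together with Morel--Voevodsky localization applied to the diagonal $\Delta : T \to T \times_S T$, which, being a surjective closed immersion (equivalently, a nil-immersion), has $\Delta^*$ and $\Delta_*$ as quasi-inverse equivalences. The difference in bookkeeping is minor: you insert the unit $\id \simeq \Delta_*\Delta^*$ between $(p_1)_*$ and $p_2^*$, whereas the paper uses the retraction structure $p_\varepsilon \circ \Delta = \id$ to identify $(p_2)_*(p_1)^* \to \id$ directly with the co-unit $\Delta^*\Delta_* \to \id$; both routes track the same natural transformation.

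The only real deviation is your reduction to the case of finite $f$, which you flag as the main potential gap. This detour is unnecessary: universal homeomorphisms are integral, hence proper (affine plus universally closed), and the proper base change formula used here is available for all proper morphisms in the non-finite-type six-functor formalism of \cite[Appendix~C]{HoyoisGLV} or \cite{KhanThesis}. So you can apply proper base change directly to $f$ and skip the continuity/Noetherian-approximation step entirely, which also removes the concern about descending generators along the tower.
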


\begin{proof} 
We argue as in the proof of \cite[Proposition~2.1.9]{CD}.
By the proper base change formula, this co-unit is identified with the natural transformation $(\pi_2)_*(\pi_1)^* \to \id$, where $\pi_1$ and $\pi_2$ are the respective projections $T \fibprod_S T \to T$.
Since $f$ is a universal homeomorphism, its diagonal $\Delta : T \to T\fibprod_S T$ is a nilpotent closed immersion.
Then by the localization theorem (cf. \cite[Proposition~2.3.6(1)]{CD}), $\Delta^*$ is an equivalence.
Since $\pi_1$ and $\pi_2$ are retractions of $\Delta$ it follows that we have canonical identifications $\Delta^* \simeq (\pi_\varepsilon)_*$ and $\Delta_* \simeq (\pi_\varepsilon)^*$ for each $\varepsilon\in\{1,2\}$.
In particular, the natural transformation $(\pi_2)_*(\pi_1)^* \to \id$ is identified with the co-unit $\Delta^*\Delta_* \to \id$, which is invertible.
\end{proof}

For the unit map, we will require a more involved argument.
We begin by introducing some notation.

\begin{notat}\label{notat:n_epsilon}
Given a unit $a \in \Gamma(S, \sO_S)^\times$, we write $\angles{a}$ for the induced point of $\Omega\K(S)$.
For an integer $n\ge 0$, we write $n_\epsilon$ for the formal sum
  \begin{equation*}
    n_\epsilon = 1 + \angles{-1} + 1 + \cdots
  \end{equation*}
which consists of $n$ terms.
\end{notat}

\begin{rem}
Recall that there is a canonical map of spaces $\K(S) \to \Aut(\SH(S))$ which sends the class of a perfect complex $\xi$ to the auto-equivalence $\Sigma^{\xi} = \Th_S(\xi) \otimes -$ of $\SH(S)$.
It sends $0$ to the identity $\id_{\SH(S)}$ and thus induces a canonical map
  \begin{equation}
    \Omega\K(S) \to \Aut(\id_{\SH(S)}),
  \end{equation}
via which we may also regard $n_\epsilon$ (\notatref{notat:n_epsilon}) as an automorphism of $\id_{\SH(S)}$.
\end{rem}

We are grateful to Marc Hoyois for suggesting the following re-interpretation of \cite[Proposition~B.1.4]{EHKSY1}.

\begin{prop}\label{prop:trace transformation}
Let $S$ be a scheme, $P \in \Gamma(S,\sO_S)[x]$ a monic polynomial of degree $d$, and $T \subset \A^1_S$ the closed subscheme cut out by $P$.
If $f : T \to S$ denotes the canonical morphism, then there exists a canonical natural transformation
  \begin{equation*}
    \tr_f : f_*f^* \to \id_{\SH(S)}
  \end{equation*}
such that the composites
  \begin{equation*}
    \id \xrightarrow{\mrm{unit}} f_*f^* \xrightarrow{\tr_f} \id,
    \qquad
    f_*f^* \xrightarrow{\tr_f} \id \xrightarrow{\mrm{unit}} f_*f^*
  \end{equation*}
are homotopic to $d_{\epsilon}$ and $f_* \ast d_\epsilon \ast f^*$, respectively.
\end{prop}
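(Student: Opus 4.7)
The plan is to construct $\tr_f$ by combining Morel--Voevodsky homotopy purity for the closed immersion $i : T \hookrightarrow \A^1_S$ (whose conormal bundle is canonically trivialized by $dP$) with $\A^1$-invariance for the projection $p : \A^1_S \to S$, following Hoyois's reinterpretation of \cite[Proposition~B.1.4]{EHKSY1}. Both composite identities will then be reduced to Morel's $\A^1$-degree computation for $x^d$.

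Factor $f = p \circ i$. Applying $p_*$ to the localization cofiber sequence $i_* i^! \mathbf{1}_{\A^1_S} \to \mathbf{1}_{\A^1_S} \to j_* j^* \mathbf{1}_{\A^1_S}$, where $j : \A^1_S \setminus T \hookrightarrow \A^1_S$ is the open complement, and using purity for $i$ to identify $i^! \mathbf{1}_{\A^1_S}$ with a Thom-shifted copy of $\mathbf{1}_T$ (trivialized by $dP$), together with the $\A^1$-invariance identification $p_* \mathbf{1}_{\A^1_S} \simeq \mathbf{1}_S$, yields a canonical map $\tau : f_* \mathbf{1}_T \to \mathbf{1}_S$ in $\SH(S)$. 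Since $f$ is proper, the projection formula identifies $f_* f^* E \simeq f_* \mathbf{1}_T \otimes E$ naturally in $E$, and we set $\tr_f := \tau \otimes (-) : f_* f^* \to \id_{\SH(S)}$.

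For the first composite, write the unit at $\mathbf{1}_S$ as $u_0 : \mathbf{1}_S \to f_* \mathbf{1}_T$; then $\tr_f \circ u = (\tau \circ u_0) \otimes (-)$, so it suffices to identify $\tau \circ u_0 \in \pi_{0,0} \mathbf{1}_S$ with $d_\epsilon$. Unwinding the construction, this element is the motivic degree of the extension $\bar P : \bP^1_S \to \bP^1_S$ of $P$ (well-defined since $P$ is monic). The $\A^1$-homotopy $P_t := (1-t) P + t\,x^d$ through monic polynomials of degree $d$ reduces the computation to the case $P = x^d$, for which Morel's computation of the $\A^1$-degree of a self-map of $\bP^1$ gives $d_\epsilon$.

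For the second composite, $u \circ \tr_f = (u_0 \circ \tau) \otimes (-)$, so it suffices to identify the endomorphism $u_0 \circ \tau$ of $f_* \mathbf{1}_T$ with the image under $f_*$ of $d_\epsilon \cdot \id_{\mathbf{1}_T}$. By \propref{prop:co-unit invertible}, $f_*$ is fully faithful, giving a bijection $\End_{\SH(S)}(f_* \mathbf{1}_T) \simeq \End_{\SH(T)}(\mathbf{1}_T) = \pi_{0,0} \mathbf{1}_T$. Base change compatibility of the construction of $\tau$ along $f : T \to S$, together with the triangle identities for the adjunction $f^* \dashv f_*$, identifies $u_0 \circ \tau$ under this bijection with $\tau_{f_T} \circ u_{f_T}$ for the base change $f_T : T \times_S T \to T$, which is cut out in $\A^1_T$ by the monic polynomial $f^* P$ of the same degree $d$. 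Applying the first composite identity to $f_T$ then yields $d_\epsilon$. The main obstacle throughout is setting up the construction of $\tau$ so that it is manifestly natural under base change, and matching $\tau \circ u_0$ with the classical motivic degree of $\bar P$.
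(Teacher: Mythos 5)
Your construction of $\tr_f$ via localization and purity for $i : T \hookrightarrow \A^1_S$ is essentially the explicit unwinding of the trace transformation of \cite[4.3.1]{DJK} that the paper invokes, and your strategy for the first composite --- identify $\tau\circ u_0$ with the $\A^1$-degree of $\bar P$, deform through monic polynomials to $x^d$, apply Morel's degree computation --- is a legitimate alternative to the paper's route, which instead recognizes the composite as a framed correspondence and cites \cite[Proposition~B.1.4]{EHKSY1} directly (that cited result is, in effect, exactly the degree computation you are re-deriving). Two caveats on this part: you should say more about why $\tau\circ u_0$ actually equals the degree of $\bar P$, which is nontrivial and requires a Pontryagin--Thom-type argument; and you should check that the twists line up in your construction, since $i^!\MonUnit_{\A^1_S}\simeq \Sigma^{-N_i}\MonUnit_T$ is $\MonUnit_T(-1)[-2]$, not $\MonUnit_T$, so the untwisting has to come from the full trivialization of $\bL_f$ (combining $N_i^\vee$ and $\Omega_{\A^1/S}|_T$), not just of the conormal bundle.

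The real gap is in the second composite. You invoke \propref{prop:co-unit invertible} to conclude that $f_*$ is fully faithful and obtain a bijection $\End_{\SH(S)}(f_*\MonUnit_T)\simeq\End_{\SH(T)}(\MonUnit_T)$. But \propref{prop:co-unit invertible} assumes that $f$ is a universal homeomorphism, which is far from the case in the generality of the present statement: take $P=x^2-1$ over $S=\Spec\Q$, so that $T=S\sqcup S$ and $f$ is the fold map. Then $f_*\MonUnit_T\simeq\MonUnit_S\oplus\MonUnit_S$, and $f_*$ is manifestly not fully faithful --- indeed $\End(f_*\MonUnit_T)$ consists of $2\times 2$ matrices over $\End(\MonUnit_S)$, whereas the image of $f_*$ is only the diagonal ones, and $u_0\circ\tau$ is a rank-one matrix that is not diagonal. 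So the step ``under this bijection $u_0\circ\tau$ corresponds to $\tau_{f_T}\circ u_{f_T}$'' does not make sense as written. The correct mechanism, which is what the paper uses, is the transverse base-change compatibility of $\tr_f$ along $f$ itself (\cite[Proposition~2.5.6]{DJK}): this directly identifies the second composite, in cohomological form, with the first composite for the base change $\pi_1 : T\times_S T\to T$ with framing $\pi_2^*(\tau)$, without any appeal to full faithfulness. Your closing sentence gestures at this, but the route through $\End(f_*\MonUnit_T)\simeq\End(\MonUnit_T)$ needs to be excised, not merely supplemented. (If you are only interested in the application to the main theorem, where $f$ really is a universal homeomorphism, your argument would go through, but the proposition is stated and used in greater generality.)
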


\begin{proof}
Note that $f : T \to S$ is finite and syntomic.
The conormal sheaf $\sN_{T/\A^1_S} \simeq (P)/(P^2)$ is free of rank $1$, and the generator $P$ induces a canonical trivialization $\tau : \bL_{f} \simeq 0$ in $\K(T)$.
Therefore, the trace transformation $\tr_f$ of \cite[4.3.1]{DJK} induces a canonical natural transformation
  \begin{equation*}
    f_* f^* \simeq f_* \Sigma^{\bL_f} f^* \xrightarrow{\tr_f} \id
  \end{equation*}
which we denote again by $\tr_f$.
The claim can be reformulated in cohomological terms as the assertion that, for every $E \in \SH(S)$, the composites
  \begin{align}
    E(S, 0) \xrightarrow{f^*} E(T, 0)
      \simeq E(T, \angles{\bL_f})
      \xrightarrow{f_!} E(S, 0),\label{eq:composite 1}
    \\
    E(T, 0) \simeq E(T, \angles{\bL_f})
      \xrightarrow{f_!} E(S, 0)
      \xrightarrow{f^*} E(T, 0)\label{eq:composite 2}
  \end{align}
are homotopic to multiplication by $d_\epsilon$.

Regarding the assignment $X \mapsto E(X, 0)$ as a presheaf with framed transfers, the first composite is induced by the framed correspondence
  \begin{equation*}
    \begin{tikzcd}
      & T \ar[swap]{ld}{f,\tau}\ar{rd}{f} &
      \\
      S & & S.
    \end{tikzcd}
  \end{equation*}
Therefore the claim follows from \cite[Proposition~B.1.4]{EHKSY1}.
The second composite is identified, by the transverse base change property of the trace transformation $\tr_f$ \cite[Proposition~2.5.6]{DJK}, with
  \begin{equation*}
    E(T, 0)
      \xrightarrow{\pi_2^*} E(T\fibprod_S T, 0)
      \simeq E(T\fibprod_S T, \angles{L_f})
      \xrightarrow{(\pi_1)_!} E(T, 0),
  \end{equation*}
where $\pi_1$ and $\pi_2$ are the first and second projections of $T\fibprod_S T$, respectively.
As above, this is induced by the framed correspondence
  \begin{equation*}
    \begin{tikzcd}
      & T \fibprod_S T \ar{rd}{\pi_2}\ar[swap]{ld}{\pi_1,\pi_2^*(\tau)} &
      \\
      T & & T
    \end{tikzcd}
  \end{equation*}
so the claim follows by another application of \cite[Proposition~B.1.4]{EHKSY1}.
\end{proof}

\begin{lem}\label{lem:q_epsilon invertible}
Let $S$ be the spectrum of a field $k$ of exponential characteristic $p$.
Then for any power $q$ of $p$, the canonical map
  \begin{equation*}
    \End_{\SH(S)}(\MonUnit_S) \to \End_{\SH(S)}(\MonUnit_S)\pinv \simeq \End_{\SH(S)\pinv}(\MonUnit_S)
  \end{equation*}
sends $q_\epsilon \in \End_{\SH(S)}(\MonUnit_S)$ to a unit.
\end{lem}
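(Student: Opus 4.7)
The plan is to exhibit $q_\epsilon$ as a divisor of the integer $q = p^n$ inside the commutative ring $R := \End_{\SH(S)}(\1_S)$, using only the involution identity $\angles{-1}^2 = \id$. Once this divisibility is established, the conclusion is immediate: $q = p^n$ becomes a unit after inverting $p$, so any divisor of $q$ in $R$ becomes a unit in $R\pinv$.

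The first step is to verify that $\angles{-1}$ is an involution in $R$. This should follow by unwinding the map $\Omega\K(S) \to \Aut(\id_{\SH(S)})$ recalled in \notatref{notat:n_epsilon}: restricted to $\pi_0$, it is a group homomorphism from $(K_1(S),+)$ to $(\End(\1_S)^\times,\circ)$, and $\angles{a}$ is by definition the image of the $K_1$-class of multiplication by $a$ on $\sO_S$. The relation $[ab] = [a]+[b]$ in $K_1(S)$ therefore gives $\angles{ab} = \angles{a}\circ\angles{b}$, whence $\angles{-1}^2 = \angles{(-1)^2} = \angles{1} = \id$.

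Next I would dispose of the two small cases. If $p=1$ then $q=1$ and $1_\epsilon = \id$, while if $p=2$ then $-1=1$ in $k$, whence $\angles{-1} = \angles{1} = \id$ and $q_\epsilon = q\cdot\id$ is a unit in $R\pinv$. The main case is $p$ odd, so that $q$ is odd. I would then write
\[
q_\epsilon = a + b\,\angles{-1}, \qquad a := \tfrac{q+1}{2},\ b := \tfrac{q-1}{2},
\]
inside $R$. Using $\angles{-1}^2 = 1$ and the commutativity of $R$, the computation
\[
q_\epsilon \cdot \bigl(a - b\,\angles{-1}\bigr) = a^2 - b^2\angles{-1}^2 = a^2 - b^2 = (a-b)(a+b) = q
\]
exhibits $q_\epsilon$ as a divisor of $q$ in $R$, completing the proof.

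The only step requiring any real thought is the verification of $\angles{-1}^2 = \id$; I expect this to be the main (and really only) obstacle, but it should follow routinely once one makes the $K_1$-to-automorphisms map explicit. The remaining algebra is just the factorization $(a+b\,\angles{-1})(a-b\,\angles{-1}) = a^2 - b^2$ in the subring $\Z[\angles{-1}]/(\angles{-1}^2 - 1)$ of $R$.
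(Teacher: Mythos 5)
Your proof is correct, and it takes a genuinely different and more elementary route than the paper's. The paper invokes Morel's computation $\End_{\SH(S)}(\MonUnit_S)\simeq\GW(k)$ (together with its extension in \cite[Lemma~10.12]{norms}), then reduces via the cartesian square $\GW(k)\to\bZ\times_{\bZ/2}\W(k)$ to checking invertibility of $q_\epsilon$ separately in $\bZ\pinv$ and $\W(k)\pinv$, the latter by noting $q_\epsilon\mapsto 1$ in $\W(k)$ for $p$ odd and $\W(k)\pinv=0$ for $p=2$. Your argument instead exhibits $q_\epsilon$ directly as a divisor of $q$ inside $R=\End_{\SH(S)}(\MonUnit_S)$ via the conjugate factorization $\bigl(\tfrac{q+1}{2}+\tfrac{q-1}{2}\angles{-1}\bigr)\bigl(\tfrac{q+1}{2}-\tfrac{q-1}{2}\angles{-1}\bigr)=q$, using only the identity $\angles{-1}^2=\id$, which follows from the fact that $\Omega\K(S)\to\Aut(\id_{\SH(S)})$ is a map of grouplike $\sE_1$-spaces (so $\angles{ab}=\angles{a}\angles{b}$ on $\pi_0$), together with commutativity of $R$ (Eckmann--Hilton). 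What the paper's route buys is conceptual transparency via the known ring structure of $\GW(k)$; what yours buys is elementarity and generality. In particular, since your argument never leaves $R$ and needs no computation of it, it applies verbatim to any motivic $\infty$-category of coefficients $\bD$ equipped with the canonical map $\Omega\K(S)\to\Aut(\id_{\bD(S)})$, which would streamline \remref{rem:motivic category proof} by removing the need to factor through the realization functor $\SH\to\bD$ to verify invertibility of $q_\epsilon$.

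One small remark: you should state explicitly that $R$ is a commutative ring (by Eckmann--Hilton, since $\MonUnit_S$ is the unit of a symmetric monoidal structure); you use this both for the polynomial manipulation and to conclude that a one-sided inverse is two-sided. This is standard but worth recording since it carries the whole argument.
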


\begin{proof}
We only need to consider the case $p>1$.
Using Morel's identification $\End(\MonUnit_S) \simeq \GW(k)$ \cite{morel-pi0}, which has been extended in \cite[Lemma~10.12]{norms}, it will suffice to show that the induced element $q_\epsilon \in \GW(k)\pinv$ is invertible.
In view of the cartesian square
 \[
\begin{tikzcd}
\GW(k) \ar{r}{\dim} \ar[twoheadrightarrow]{d} & \bZ \ar[twoheadrightarrow]{d}\\
\W(k) \ar{r} & \bZ/2
\end{tikzcd}
\]
as in \cite[(3.1)]{Morel} (cf. \cite[Lemma~17]{tom-conserve}, \cite[Lemma 1.16]{knebusch}), it will in fact suffice to only check invertibility in $\bZ\pinv$ and in $\W(k)\pinv$.
The former is obvious.
For the latter, we first assume that $p$ is odd.
In this case, we note that $d_{\epsilon} = \frac{d-1}{2}h +1$ and thus $q_{\epsilon}$ is invertible in $\W(k)$ (without inverting $p$).
When $p=2$, $-1$ is trivially a sum of squares in $k$, so the Witt ring is $2$-torsion \cite[Theorem~III.3.6]{milnor1973symmetric} and the claim follows.
\end{proof}

We are now ready to complete the proof of \thmref{thm:radicial invariance}:

\begin{proof}[Proof of \thmref{thm:radicial invariance}]\hypertarget{proof:radicial invariance}
After \propref{prop:co-unit invertible} it remains to show that unit map $\id_{\SH(S)} \rightarrow f_*f^*$ becomes invertible after inverting the primes in $\sP$.
By continuity \cite[Proposition~C.12(4)]{HoyoisGLV} and proper base change, we may use a noetherian approximation argument \cite[Theorem~C.9]{TT} to assume that $S$ is noetherian and of finite dimension.
Then using \cite[Proposition~A.3]{norms} (and proper base change again), we may assume that $S$ is a henselian local scheme (which is still noetherian and finite-dimensional); we denote its closed point by $i : \{s\} \to S$ and the complement by $j : U \to S$.
By the localization theorem, the pair of functors $(i^*, j^*)$ is jointly conservative (see \cite[Section~2.3]{CD}).
Since $U$ has dimension strictly lower than that of $S$, we can argue by induction on the dimension of $S$ to reduce to the case where $S = \{s\}$, i.e., where $S$ is the spectrum of a field $k$.
Since $f$ is radicial, it is then induced by a purely inseparable field extension $k \subset K$.
In characteristic zero ($p=1$), we are already done.
Otherwise, by using continuity again, we may assume that the extension $k \subset K$ is finite, i.e., that $K = k(\alpha)$ with $\alpha^{q} \in k$ for $q$ some power of the prime $p$.
Now it follows from \propref{prop:trace transformation} and \lemref{lem:q_epsilon invertible} that the unit map $\id_{\SH(S)} \to f_*f^*$ is invertible after inverting $p$.
But the assumption implies that $p \in \sP$, so the conclusion follows.
\end{proof}

\begin{rem}\label{rem:motivic category proof}
We now explain how the above proof can be generalized to an arbitrary motivic \inftyCat of coefficients $\bD$ as in \remref{rem:motivic category}.
First, we recall that the theory of fundamental classes developed in \cite{DJK} applies in this more general setting (see 4.3.4 in \emph{op. cit.}).
Therefore, for any object $E \in \bD(S)$, the cohomology theory $X \mapsto E(X, 0)$ defines a presheaf with framed transfers on the category of $S$-schemes.
The proof of \propref{prop:trace transformation} then applies \emph{mutatis mutandis}.
To show that \lemref{lem:q_epsilon invertible} holds for $\bD$, i.e., that $q_\epsilon$ is a unit for every power $q$ of $p$, we argue as follows.
Since $\bD$ satisfies Nisnevich descent \cite[Prop.~2.3.8]{CD}, $\A^1$-invariance, and Thom stability, it follows from the universal property of $\SH$ \cite[Corollary~1.2]{Robalo} that there is a canonical monoidal realization functor $\SH(S) \to \bD(S)$.
Moreover, the canonical map $\Omega\K(S) \to \Aut(\id_{\bD(S)})$ factors through the induced map $\Aut(\id_{\SH(S)}) \to \Aut(\id_{\bD(S)})$, so the claim follows from the universal case of $\SH$.
The rest of the proof of \thmref{thm:radicial invariance} only relies on the formalism of six operations, which is available for $\bD$ \cite[Cor.~4.2.3]{KhanThesis}.
\end{rem}


\section{Applications}
\label{sec:applications}

\ssec{Duality}

Let $S$ be a scheme that is locally of finite type over a field $k$ of exponential characteristic $p$.
The structural morphism $\pi: S \rightarrow \Spec(k)$ determines a \emph{duality functor} defined by
\[
D_S(E) = \uHom(E, \pi^!(\MonUnit_k)).
\]
To justify this name, we must show that the object $\pi^!(\MonUnit_k)$ is \emph{dualizing}.
That is:

\begin{thm} \label{thm:duality}
For any compact object $E \in \SH(S)$, the canonical map
\[
E \rightarrow D_S(D_S(E))
\]
is an equivalence in $\SH(S)\pinv$.
\end{thm}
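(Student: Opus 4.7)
The strategy, following Cisinski--D\'eglise \cite{CDintegral}, is to verify biduality on a well-chosen class of compact generators. The full subcategory of compact $E \in \SH(S)\pinv$ for which the biduality map $E \to D_S D_S(E)$ is an equivalence is thick (closed under cofibre sequences, shifts, and retracts), so it suffices to exhibit a family of compact generators on which biduality can be checked by direct computation.

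The production of such generators is where the main results of this paper enter. By Corollary cor:perfection invariance applied to the universal homeomorphism $S \fibprod_k k_\perf \to S$, we may replace $k$ by its perfection and hence assume $k$ is perfect, so that de~Jong's alteration theorem produces \emph{smooth} covers. The plan is to show by noetherian induction on $\dim S$ that $\SH(S)\pinv$ is compactly generated by objects of the form $f_* \Sigma^\xi \1_X$ with $f : X \to S$ proper and $X$ smooth over $k$. Using the Morel--Voevodsky localization sequence attached to a reduced closed pair $(Z, U = S \setminus Z)$, the inductive step reduces to covering $j_! \SH(U)\pinv$, where $j : U \hook S$ is a dense open. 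Given a smooth $U$-scheme $Y$ (the usual compact generators of $\SH(U)\pinv$), Nagata compactification followed by de~Jong's theorem supplies a proper surjection $p : X \to \overline{Y}$ with $X$ smooth and proper over $k$; after shrinking $U$, the restriction of $p$ over $Y$ is finite of some degree $d$ that is a power of $p$ times an integer invertible in $\sO_U$. Combining Theorem thm:radicial invariance for the inseparable component with the trace construction of Proposition prop:trace transformation and the $d_\epsilon$-invertibility of Lemma lem:q_epsilon invertible exhibits $\Sigma^\xi \1_Y$ as a retract of (a shift and twist of) $p_* \1_X$, yielding the desired generation after iterating on complements.

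With these generators at hand, biduality follows from the six-functor formalism. Smoothness of $X$ over $k$ makes $\omega_X := \pi_X^!(\1_k) \simeq \Sigma^{\bL_{X/k}} \1_X$ $\otimes$-invertible, so every $\Sigma^\xi \1_X$ is automatically reflexive in $\SH(X)$. For $f : X \to S$ proper, the identities $f_* = f_!$ and $f^! \omega_S \simeq \omega_X$, combined with the compatibility of $f_!$ with internal hom, produce a natural isomorphism $D_S \circ f_* \simeq f_* \circ D_X$. Applying this twice transports the biduality map for $\Sigma^\xi \1_X$ in $\SH(X)$ to the biduality map for $f_* \Sigma^\xi \1_X$ in $\SH(S)$, establishing the theorem on generators.

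The principal obstacle is the construction of the compact generators: in positive characteristic, de~Jong alterations unavoidably carry inseparable residue extensions in their generic fibres, and before inverting $p$ these cannot be controlled. It is precisely the radicial invariance of Theorem thm:radicial invariance, together with the $q_\epsilon$-invertibility of Lemma lem:q_epsilon invertible, that dissolves this obstruction after inverting $p$. Once the generators are available, the rest of the argument is a formal manipulation with the six operations, enabled by the invertibility of $\omega_X$ on smooth $X$.
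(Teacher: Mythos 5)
Your proposal follows essentially the same route as the paper: reduce to a perfect base field via Corollary~\ref{cor:perfection invariance}, establish that the compact objects of $\SH(S)\pinv$ are generated as a thick subcategory by objects $f_!(\1)(n)$ with $f$ proper and the source smooth, and deduce biduality formally from Ayoub's relative purity (making $\omega_X$ $\otimes$-invertible for $X$ smooth) together with the isomorphism $D_S f_* \simeq f_* D_X$. This is precisely what the paper does in Proposition~\ref{prop:generate} followed by the citation to \cite[Theorem~7.3]{CDintegral}. The only real difference is that you re-derive the perfect-base generation statement by alterations and noetherian induction, whereas the paper simply cites it as \cite[Corollary~2.4.7]{deglise2015dimensional}; the paper's actual contribution to the duality theorem is the reduction to the perfect case via perfection-invariance, which you have correctly identified as the key step.

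One imprecision in the re-derivation: you say the finite degree $d$ of the shrunken alteration is ``a power of $p$ times an integer invertible in $\sO_U$''. In characteristic $p$ an integer is invertible in $\sO_U$ iff it is coprime to $p$, so this describes an arbitrary positive integer $d$ --- and for arbitrary $d$ the element $d_\epsilon$ is \emph{not} a unit in $\GW(k)\pinv$: by the cartesian square used in Lemma~\ref{lem:q_epsilon invertible}, invertibility of $d_\epsilon$ already forces $d = \mathrm{rank}(d_\epsilon)$ to be invertible in $\bZ\pinv$, i.e., a power of $p$. What the cited generation result actually uses is Gabber--Temkin's refinement of de~Jong's theorem, which produces alterations whose generic degree is exactly a power of $p$; then Lemma~\ref{lem:q_epsilon invertible} applies directly. (Relatedly, once $k$ is perfect, Theorem~\ref{thm:radicial invariance} is no longer needed in the generation step: the trace transformation of Proposition~\ref{prop:trace transformation} together with $q_\epsilon$-invertibility already splits off the purely inseparable part of the alteration.) With that correction your sketch is the standard argument, and the duality argument on generators is then the same formal manipulation the paper invokes.
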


\begin{rem}
As remarked in \cite[Remark~7.4]{CDintegral}, Theorem~\ref{thm:duality} implies the formalism of Grothendieck--Verdier duality for $\SH\pinv$, for locally of finite type $k$-schemes.
In particular, this gives an improvement of \cite[Theorem~2.4.8]{deglise2015dimensional}.
\end{rem}

\thmref{thm:duality} follows from the following statement, analogous to \cite[Proposition~7.2]{CDintegral}.

\begin{prop} \label{prop:generate}
The full subcategory of compact objects in $\SH(S)\pinv$ is generated as a thick subcategory by objects of the form $f_!(\MonUnit)(n)$, where $f: X \rightarrow S$ proper, $X$ is smooth over a purely inseparable extension of $k$, and $n \in \bZ$ is an integer.
\end{prop}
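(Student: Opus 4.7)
My plan is to adapt the strategy of \cite[Proposition~7.2]{CDintegral}, replacing their $\bQ$-linear hypothesis with inversion of $p$. The essential new tools are \thmref{thm:radicial invariance} and \corref{cor:perfection invariance}, which together handle the purely inseparable contributions that arise when alterations are applied over a non-perfect base field.

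First I would recall that $\SH(S)\pinv$ is compactly generated by objects $g_\sharp(\MonUnit_X)(n)$ for $g: X \to S$ smooth quasi-projective and $n \in \bZ$. Relative purity gives $g_\sharp(\MonUnit_X) \simeq g_!(\Th(T_g))$, and a splitting-principle argument reduces Thom spectra of vector bundles to finite extensions of Tate twists of the unit; thus it suffices to prove that $g_!(\MonUnit_X)(n) \in \sT$ for every smooth quasi-projective $g$, where $\sT$ denotes the thick subcategory of compact objects generated by the objects described in the proposition. By Nagata compactification, factor $g$ as $X \xrightarrow{j} \bar X \xrightarrow{\bar g} S$ with $j$ an open immersion and $\bar g$ proper, and denote by $i: Z \hookrightarrow \bar X$ the closed complement. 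Applying $\bar g_!$ to the open/closed localization triangle produces a cofiber sequence
\[
g_!(\MonUnit_X) \to \bar g_!(\MonUnit_{\bar X}) \to (\bar g i)_!(\MonUnit_Z),
\]
so Noetherian induction on $\dim \bar X$ reduces the claim to showing $\bar g_!(\MonUnit_{\bar X})(n) \in \sT$ for every proper $\bar g: \bar X \to S$.

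For such a proper $\bar g$, I would invoke de Jong's alteration theorem (in its Gabber-refined form) to obtain a proper surjective generically finite morphism $h: Y \to \bar X$ with $Y$ smooth quasi-projective over a finite purely inseparable extension $k'/k$, so that $(\bar g h)_!(\MonUnit_Y)(n)$ already lies in $\sT$ by construction. To descend from $Y$ to $\bar X$, I would factor $h$ as a proper birational map followed by a finite surjective morphism, handling the former by cdh (blow-up) descent for $\SH\pinv$ together with a Noetherian induction on the exceptional locus, and the latter by a trace argument exhibiting $\bar g_!(\MonUnit_{\bar X})$ as a direct summand of $(\bar g h)_!(\MonUnit_Y)$. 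The main obstacle is this trace step: in the $\bQ$-linear setting of \cite{CDintegral} the alteration degree is automatically invertible, whereas here only $p$ has been inverted. I would address this by using Gabber's refinement to choose the degree of $h$ coprime to an auxiliary prime $\ell \neq p$, then combining with the trace computation of \propref{prop:trace transformation} and the invertibility of $q_\epsilon$ from \lemref{lem:q_epsilon invertible} to absorb the residual $p$-power (radicial) factor via \thmref{thm:radicial invariance}.
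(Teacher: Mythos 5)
Your proof tries to re-derive from scratch the perfect-field generation theorem (namely \cite[Corollary~2.4.7]{deglise2015dimensional}) via alterations, and the trace step as you describe it does not go through. Gabber's refinement of de Jong gives an alteration $h : Y \to \bar X$ whose generic degree $d$ is prime to one chosen prime $\ell \ne p$, but this does not make $d$ invertible in $\bZ\pinv$: the degree can have arbitrary prime factors other than $\ell$, none of which has been inverted in $\SH(S)\pinv$. The trace construction only exhibits $\bar g_!(\MonUnit_{\bar X})$ as a retract of $(\bar g h)_!(\MonUnit_Y)$ after also inverting $d$ (more precisely a quadratic class $d_\epsilon$). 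Your proposed remedy, ``absorb the residual $p$-power (radicial) factor via \thmref{thm:radicial invariance},'' does not apply here: there is no reason for $d$ to be a power of $p$, and \thmref{thm:radicial invariance} concerns universal homeomorphisms, which do not appear at this step. Making this descent work after inverting only $p$ is exactly the nontrivial content of the cited result of D\'eglise, which proceeds by a more delicate prime-by-prime argument.

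You had the right ingredient but overlooked the shortcut. The paper's actual proof is a two-line reduction to the known perfect-field case: by \corref{cor:perfection invariance}, the morphism $\varphi : S_\perf \simeq S \fibprod_{\Spec(k)} \Spec(k_\perf) \to S$ induces an equivalence $\varphi^* : \SH(S)\pinv \isoto \SH(S_\perf)\pinv$. Over the perfect field $k_\perf$, \cite[Corollary~2.4.7]{deglise2015dimensional} says that compact objects of $\SH(S_\perf)\pinv$ are generated as a thick subcategory by $f_!(\MonUnit)(n)$ with $f : X \to S_\perf$ proper and $X$ smooth over $k_\perf$. For such an $f$, the composite $X \to S_\perf \to S$ is proper and $X$ is smooth over the purely inseparable extension $k_\perf$ of $k$, so transporting through $\varphi^*$ gives exactly the stated generating family. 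This sidesteps the alterations machinery entirely and explains why the statement allows smoothness only over a purely inseparable extension of $k$ rather than over $k$ itself.
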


\begin{proof}
If $k$ is perfect, the statement is \cite[Corollary~2.4.7]{deglise2015dimensional}.
In general, the morphism $\varphi : S_\perf \simeq S \fibprod_{\Spec(k)}\Spec(k_\perf) \to S$ induces an equivalence $\varphi^* : \SH(S) \to \SH(S_\perf)$ by \corref{cor:perfection invariance}.
If $f : X \to S_\perf$ is a proper morphism with $X$ smooth over $k_\perf$, then the composite $X \to S_\perf \to S$ is as in the statement, so we conclude.
\end{proof}

\begin{proof}[Proof of \thmref{thm:duality}]
As in the proof of \cite[Theorem~7.3]{CDintegral}, this follows immediately from Proposition~\ref{prop:generate}, and Ayoub's purity theorem for smooth morphisms \cite[Section 1.6]{Ayoub}, \cite[Appendix C]{HoyoisGLV}.
\end{proof}

\ssec{Removal of perfectness hypotheses}
\label{ssec:non-perfect}

\corref{cor:perfection invariance} allows us to immediately drop perfectness hypotheses in many known results, at least after inverting the exponential characteristic.
Some examples are listed below.

\begin{thm} \label{thm:dualizability}
Let $S$ be the spectrum of a field $k$ of exponential characteristic $p$.
For any smooth $S$-scheme $X$, the suspension spectrum $\Sigma^{\infty}_+(X)$ is strongly dualizable in $\SH(S)\pinv$.
In particular, $\SH(S)\pinv$ is generated under colimits by the strongly dualizable objects.
\end{thm}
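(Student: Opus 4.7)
The plan is to reduce, via \corref{cor:perfection invariance}, to the case of a perfect base field, where the conclusion follows from alterations together with Ayoub's purity theorem. Concretely, let $\varphi : S_\perf \to S$ denote the canonical morphism, so that $S_\perf = \Spec(k_\perf)$. By \corref{cor:perfection invariance}, the pullback $\varphi^* : \SH(S)\pinv \to \SH(S_\perf)\pinv$ is a symmetric monoidal equivalence, and therefore preserves and detects strong dualizability. For any smooth $S$-scheme $X$, the base change $X_\perf := X \fibprod_S S_\perf$ is smooth over $k_\perf$, and $\varphi^*(\Sigma^{\infty}_+(X)) \simeq \Sigma^{\infty}_+(X_\perf)$. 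It therefore suffices to prove that $\Sigma^{\infty}_+(X_\perf)$ is strongly dualizable in $\SH(S_\perf)\pinv$.

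Over the perfect field $k_\perf$, \propref{prop:generate} (which in this situation reduces to \cite[Corollary~2.4.7]{deglise2015dimensional}) asserts that the compact objects of $\SH(S_\perf)\pinv$ form the thick subcategory generated by objects of the form $f_!(\MonUnit)(n)$, where $f: Y \to S_\perf$ is proper, $Y$ is smooth over $k_\perf$, and $n \in \bZ$. Each such generator is strongly dualizable: for $\pi : Y \to S_\perf$ smooth and proper, Ayoub's purity theorem \cite[Section~1.6]{Ayoub} identifies $\pi_! \simeq \pi_\# \Sigma^{-\bL_\pi}$, and a direct check using the projection formula then shows that $\Sigma^{\infty}_+(Y) = \pi_\#(\MonUnit)$ is strongly dualizable with dual $\pi_\# \Sigma^{-\bL_\pi}(\MonUnit) \simeq \pi_!(\MonUnit)$. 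Since the strongly dualizable objects form a thick subcategory stable under Tate twists, every compact object of $\SH(S_\perf)\pinv$ is strongly dualizable. In particular this applies to $\Sigma^{\infty}_+(X_\perf)$, which is compact because $X_\perf \to S_\perf$ is smooth of finite type.

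The second assertion of the theorem is now immediate: $\SH(S)\pinv$ is generated under colimits by objects of the form $\Sigma^{\infty}_+(X)(n)$ for $X$ smooth of finite type over $S$ and $n \in \bZ$, each of which is strongly dualizable by the first part.

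The principal work is already contained in \corref{cor:perfection invariance}; the remainder is a formal consequence of the six-functor formalism, Ayoub's purity, and the existing generation result over perfect fields, and so presents no essential obstacle.
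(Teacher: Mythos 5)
Your proposal is correct and follows essentially the same route as the paper: reduce to the perfect-field case via \corref{cor:perfection invariance}, then handle that case using the known generation/duality theory over perfect fields. Where the paper simply cites Riou's theorem \cite[Corollary~B.2]{levine2013algebraic}, you re-derive it from \propref{prop:generate} (i.e.\ \cite[Corollary~2.4.7]{deglise2015dimensional}) together with Ayoub's smooth--proper purity; that is in fact the content of Riou's argument, so the difference is one of exposition rather than substance.
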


Indeed, we can use \corref{cor:perfection invariance} to reduce the case where $k$ is perfect, which is due to Riou, see \cite[Corollary~B.2]{levine2013algebraic}.

\begin{rem}
\propref{prop:trace transformation} gives the following refinement of \cite[Lemma~B.3]{levine2013algebraic}.
Suppose $f : Y \to X$ is a finite étale morphism of degree $d$ between smooth connected $k$-schemes.
Then, up to replacing $X$ by a dense open subset $U \subseteq X$ and $f$ by its base change $f_U : Y_U \to U$, there are isomorphisms of natural transformations
  \begin{align*}
    \big(\id
      \xrightarrow{\mrm{unit}} f_*f^*
      \simeq f_\sharp f^*
      \xrightarrow{\mrm{counit}} \id\big)
    &\simeq 
    d_\epsilon,\\
    \big(f_*f^*
      \simeq f_\sharp f^*
      \xrightarrow{\mrm{counit}} \id
      \xrightarrow{\mrm{unit}} f_*f^* \id\big)
    &\simeq 
    f_* \ast d_\epsilon \ast f^*,
  \end{align*}
where $d_\epsilon$ is as in \notatref{notat:n_epsilon}.
To prove this, note there are canonical identifications $\Sigma^{L_f} \simeq \id$ and $f_* \simeq f_! \simeq f_\sharp$ since $f$ is finite and étale, and the composite
  \begin{equation*}
    f_*f^* \simeq f_\sharp f^* \xrightarrow{\mrm{counit}} \id
  \end{equation*}
is canonically homotopic to the trace transformation $\tr_f : f_*f^* \simeq f_*\Sigma^{L_f}f^* \to \id$.
Replacing $X$ by its generic point, we may assume that $X = \Spec(k)$.
Then $Y = \Spec(K)$ with $K/k$ a finite separable field extension, so by the primitive element theorem we are now in the situation of \propref{prop:trace transformation}.
\end{rem}

\sssec{}

We also have the following variant of Bachmann's conservativity theorem \cite{tom-conserve}.

\begin{thm} \label{thm:perf}
Let $k$ be a field with finite $2$-\'etale cohomological dimension and exponential characteristic $p$. Then the canonical functor
\[
\SH(k)\pinv \rightarrow \DM(k; \bZ\pinv)
\]
is conservative on compact objects.
\end{thm}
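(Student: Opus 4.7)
The plan is to reduce Theorem~\ref{thm:perf} to the case of a perfect field, where it is Bachmann's theorem \cite{tom-conserve}. Let $k_\perf$ denote the perfection of $k$; this is a perfect field of the same exponential characteristic $p$. Since the canonical morphism $\varphi : \Spec(k_\perf) \to \Spec(k)$ is a universal homeomorphism, Grothendieck's topological invariance of the small \'etale site yields $(\Spec k)_\et \simeq (\Spec k_\perf)_\et$; in particular $k_\perf$ also has finite $2$-\'etale cohomological dimension. Hence Bachmann's theorem applies to give that the canonical functor $\SH(k_\perf)\pinv \to \DM(k_\perf; \bZ\pinv)$ is conservative on compact objects.

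To transfer this conservativity statement back to $k$, I would combine \corref{cor:perfection invariance}, which provides an equivalence $\varphi^* : \SH(k)\pinv \isoto \SH(k_\perf)\pinv$, with the analogous equivalence for $\DM$. The latter holds because $\DM(-;\bZ\pinv)$ is a motivic \inftyCat of coefficients in the sense of \remref{rem:motivic category}, so the proof of \thmref{thm:radicial invariance} applies \emph{mutatis mutandis}, as explained in \remref{rem:motivic category proof}. Both equivalences are symmetric monoidal colimit-preserving equivalences of presentable stable \inftyCats with compact unit, so in particular they restrict to equivalences between subcategories of compact objects.

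The final step is to observe that the canonical realization functor $\SH \to \DM$ commutes with the pullback $\varphi^*$, yielding a commutative square. Conservativity (on compact objects) of the bottom horizontal functor over $k_\perf$ then immediately implies conservativity of the top horizontal functor over $k$. The main---though mild---obstacle is precisely this last compatibility: verifying that the realization $\SH \to \DM$ intertwines $\varphi^*$ on both sides. This is essentially formal from the construction of the realization as the unique symmetric monoidal colimit-preserving functor afforded by the universal property of $\SH$ (as invoked in \remref{rem:motivic category proof}) together with the naturality of this universal property in the base scheme, but it deserves to be stated explicitly to close the argument.
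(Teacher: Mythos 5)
Your proposal is correct and follows essentially the same reduction as the paper: use \corref{cor:perfection invariance} together with the analogous perfection-invariance for $\DM$ and the compatibility of the realization functor with $\varphi^*$ to reduce to Bachmann's theorem over $k_\perf$. The only (cosmetic) divergence is in how the $\DM$-side invariance is obtained---the paper cites it directly from \cite[Lemma~3.15]{CDintegral} rather than rederiving it via \remref{rem:motivic category proof}---and the extra checks you spell out (that $k_\perf$ retains finite $2$-\'etale cohomological dimension, and that the realization intertwines $\varphi^*$) are correct details left implicit in the paper's one-line proof.
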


\begin{proof} Using Corollary~\ref{cor:perfection invariance} and the analogous result for mixed motives \cite[Lemma~3.15]{CDintegral}, we may replace $k$ by its perfection.
Then the result is proven in \cite[Theorem 16]{tom-conserve}.
\end{proof}

\begin{rem} \label{rem:tom} Using Theorem~\ref{thm:perf} we can deduce the Pic-injectivity result of \cite[Theorem 18]{tom-conserve}. This extends Bachmann's results on Po Hu's conjecture on invertibility of the the suspension spectra of affine quadrics to imperfect fields; see \emph{loc. cit} for details.
\end{rem}

\sssec{}

We can similarly extend the recognition principle for infinite loop spaces \cite[Theorem~3.5.13]{EHKSY1} to non-perfect fields.

\begin{thm} \label{thm:recog-perf}
Let $k$ be a field of exponential characteristic $p$.
Then there are canonical equivalences of symmetric monoidal $\infty$-categories
  \begin{align*}
    \H^{\fr}(k)^{\gp}\cinv &\simeq \SH^{\veff}(k)\cinv,\\
    \SH^{S^1,\fr}(k) \cinv &\simeq \SH^{\eff}(k)\cinv.
  \end{align*}
\end{thm}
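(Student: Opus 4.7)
The plan is to reduce to the case of a perfect field, where the recognition principle of \cite[Theorem~3.5.13]{EHKSY1} already supplies the two desired equivalences, and to then transfer the result along the perfection map $\varphi : \Spec(k_\perf) \to \Spec(k)$ by establishing perfection invariance on all four categories after inverting $p$.

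For the right-hand sides, $\SH^{\veff}(k)\cinv$ is the full subcategory of $\SH(k)\cinv$ generated under colimits by $\Sigma^\infty_+ X$ for $X \in \Sm_k$, and analogously for $\SH^{\eff}(k)\cinv$ in the $S^1$-stable setting. The pullback $\varphi^* : \SH(k)\cinv \to \SH(k_\perf)\cinv$, which is a symmetric monoidal equivalence by \corref{cor:perfection invariance}, sends this collection of generators to the corresponding collection over $k_\perf$, hence restricts to an equivalence $\SH^{\veff}(k)\cinv \simeq \SH^{\veff}(k_\perf)\cinv$ and similarly in the effective $S^1$-stable setting.

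The substantive step is to show the analogous perfection invariance for $\H^{\fr}(k)^{\gp}\cinv$ and $\SH^{S^1,\fr}(k)\cinv$, namely that $\varphi^*$ is an equivalence after inverting $p$. My strategy is to re-run the argument of \thmref{thm:radicial invariance} in the framed setting. Once one knows continuity and the relevant base change properties for framed motivic spectra (which should be formal consequences of the universal property of framed transfers, together with the perfect-field case of the recognition principle), the same noetherian approximation and induction on dimension reduces everything to a finite purely inseparable extension $k \subseteq K = k(\alpha)$ with $\alpha^q \in k$ and $q$ a power of $p$. For such an extension, \propref{prop:trace transformation} is already formulated entirely in terms of framed correspondences, so the same trace computation, combined with the invertibility of $q_\epsilon$ from \lemref{lem:q_epsilon invertible}, shows that both the unit and co-unit of $(\varphi^*, \varphi_*)$ are invertible after inverting $p$.

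The main obstacle is verifying that the framed variants of $\SH$ possess the formal properties (continuity, proper base change, and Nisnevich-style localization for a henselian pair) used in the proof of \thmref{thm:radicial invariance}. With these in hand, one assembles the commutative square
\[
\begin{tikzcd}
\H^{\fr}(k)^{\gp}\cinv \ar{r} \ar{d}{\varphi^*} & \SH^{\veff}(k)\cinv \ar{d}{\varphi^*}\\
\H^{\fr}(k_\perf)^{\gp}\cinv \ar{r}{\sim} & \SH^{\veff}(k_\perf)\cinv
\end{tikzcd}
\]
whose bottom row is the recognition equivalence of \cite[Theorem~3.5.13]{EHKSY1} and whose vertical maps are equivalences by the preceding paragraphs, yielding the first asserted equivalence; the second follows from the analogous diagram with $\SH^{S^1,\fr}$ and $\SH^{\eff}$ replacing $\H^{\fr}$ and $\SH^{\veff}$.
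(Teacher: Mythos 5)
The overall skeleton — reduce both sides to the perfect field $k_\perf$ and apply the EHKSY recognition theorem there — matches the paper, and the treatment of the right-hand sides (restricting $\varphi^* : \SH(k)\cinv \isoto \SH(k_\perf)\cinv$ to the (very) effective subcategories) is exactly the paper's argument. The problem is with the left-hand sides, where the paper's \lemref{lem:Hfr invariance} is proved by a different and rather more delicate argument than what you propose, and your strategy has a genuine gap.

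You suggest ``re-running the argument of \thmref{thm:radicial invariance} in the framed setting,'' which presupposes a six-functor formalism (proper base change, a well-behaved pushforward $\varphi_*$, localization, etc.) for $\H^\fr(\text{--})^\gp$ and $\SH^{S^1,\fr}(\text{--})$. You dismiss this as ``formal consequences of the universal property of framed transfers,'' but that is not available in the paper and is not trivial; the paper deliberately \emph{avoids} needing it. For full faithfulness the paper works directly at the level of presheaves with framed transfers: the two composites \eqref{eq:composite 1}--\eqref{eq:composite 2} from \propref{prop:trace transformation} are already part of the structure of a grouplike framed motivic space, so one can apply \cite[Proposition~B.1.4]{EHKSY1} to the Frobenius and then transfer the invertibility of $q_\epsilon$ from $\SH$ via the endomorphism-ring identification $\End_{\H^{\fr}(k)^{\gp}}(\MonUnit_k) \simeq \End_{\SH(k)}(\MonUnit_k)$ of \cite[Theorem~3.5.17]{EHKSY1}. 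No base change, noetherian approximation, or induction on dimension enters — the base is a field, so those reductions in the proof of \thmref{thm:radicial invariance} are not relevant here; only continuity is used, to reduce from $\varphi$ to the Frobenius.

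More seriously, full faithfulness of $\varphi^*$ is not enough: you also need essential surjectivity, and unit/co-unit invertibility for the Frobenius alone does not give you essential surjectivity of $\varphi^*$ after passing to the colimit. The paper handles this by a separate argument that your proposal does not contain: by Suslin's descent lemma \cite[Lemma~1.12]{suslin-last}, every smooth irreducible $k_\perf$-scheme is, up to a universal homeomorphism, the base change of a smooth $k$-scheme; one is thus reduced to showing that universal homeomorphisms of smooth $k_\perf$-schemes become invertible in $\H^\fr(k_\perf)^\gp\pinv$, and this is deduced by first applying the \emph{perfect-field} case of the recognition theorem \cite[Theorem~3.5.13(i)]{EHKSY1} to pass to $\SH(k_\perf)\pinv$ and then invoking \thmref{thm:radicial invariance}. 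This maneuver — using the already-known perfect case of the recognition principle to reduce the essential surjectivity question for the framed category to one in $\SH$ — is the crux of the proof of \lemref{lem:Hfr invariance} and is absent from your proposal.
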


\begin{rem} \label{rem:cancel}
Theorem~\ref{thm:recog-perf} also implies cancellation in the sense of \cite[Theorem~3.5.8]{EHKSY1} for non-perfect fields, after inverting the exponential characteristic.
\end{rem}

We thank Marc Hoyois for pointing out a gap in the original proof of the following lemma.

\begin{lem}\label{lem:Hfr invariance}
Let $k$ be a field of exponential characteristic $p$.
Then the morphism $f : \Spec(k_\perf) \to \Spec(k)$ induces an equivalence
  \begin{equation*}
    f^* : \H^{\fr}(k)^{\gp}\cinv \rightarrow \H^{\fr}(k_{\perf})^{\gp}\cinv
  \end{equation*}
of symmetric monoidal \inftyCats.
\end{lem}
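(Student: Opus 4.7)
The plan is to transpose the proof of \thmref{thm:radicial invariance} into the setting of group-completed framed motivic spaces, since the essential ingredients (trace transformations for finite syntomic morphisms, localization, proper base change, and continuity) are all available in this context via the framework of \cite{EHKSY1}.

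As a first step, I would use continuity of $\H^{\fr}$ along the cofiltered limit $\Spec(k_\perf) \simeq \lim_n \Spec(k_n)$ (where $k_n$ ranges over finite purely inseparable subextensions $k \subseteq k_n \subseteq k_\perf$) to reduce to proving that $f_n^* : \H^{\fr}(k)^{\gp}\cinv \to \H^{\fr}(k_n)^{\gp}\cinv$ is an equivalence for each such $k_n$. By the primitive element theorem we may assume $k_n = k(\alpha)$ with $\alpha^q \in k$ for some power $q$ of $p$. In characteristic zero ($p=1$) there is nothing to prove, so we focus on the positive characteristic case.

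Next, for such a finite purely inseparable morphism $f : \Spec(k(\alpha)) \to \Spec(k)$, I would verify separately that the unit $\id \to f_* f^*$ and the co-unit $f^* f_* \to \id$ of the adjunction are both invertible after inverting $p$. For the co-unit, the argument of \propref{prop:co-unit invertible} carries over: the diagonal $\Delta : \Spec(k(\alpha)) \to \Spec(k(\alpha)) \fibprod_{\Spec(k)} \Spec(k(\alpha))$ is a nil-immersion, so by proper base change and the localization theorem for $\H^{\fr}$ (which, like $\A^1$-invariance and Nisnevich descent, is part of the motivic formalism enjoyed by framed motivic spaces), the co-unit is an equivalence even before inverting $p$. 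For the unit, I would use that $\Spec(k(\alpha))$ is cut out in $\A^1_{\Spec(k)}$ by the monic polynomial $x^q - \alpha^q$ of degree $q$; the proof of \propref{prop:trace transformation} then produces a trace transformation $\tr_f : f_* f^* \to \id$ on $\H^{\fr}(k)^{\gp}$ (the key input, Proposition~B.1.4 of \cite{EHKSY1}, is already formulated at the level of presheaves with framed transfers) such that the unit--trace composite $\id \to f_* f^* \to \id$ is homotopic to $q_\epsilon$.

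Finally, since the canonical map $\Omega\K(k) \to \Aut(\id_{\H^{\fr}(k)^{\gp}})$ factors through $\Aut(\id_{\SH(k)})$ (by the universal property of $\SH$, exactly as in \remref{rem:motivic category proof}), \lemref{lem:q_epsilon invertible} implies that $q_\epsilon$ acts invertibly on $\H^{\fr}(k)^{\gp}\cinv$. Hence the unit becomes invertible after inverting $p$, and combined with the already-invertible co-unit this yields the claimed equivalence; monoidality is automatic since $f^*$ is symmetric monoidal. The main obstacle is not any single calculation but rather the bookkeeping of ensuring that each of the six-functor-style ingredients used in \secref{sec:perfinv} (continuity, proper base change, localization, and in particular the framed trace construction underlying \propref{prop:trace transformation}) is indeed available in the group-completed framed setting; once these are in hand, the proof proceeds verbatim.
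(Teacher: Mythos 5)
Your overall adjunction strategy (show both unit and co-unit invertible) is reasonable in principle, but the proposal papers over a genuine obstruction that the authors specifically designed their argument to avoid: \emph{the six-functor formalism that the co-unit argument needs is not available for $\H^{\fr}(-)^{\gp}$}. The proof of \propref{prop:co-unit invertible} you want to "carry over" uses proper base change ($f^*f_* \simeq (\pi_2)_*(\pi_1)^*$) and the localization theorem ($\Delta^*$ invertible for a nil-immersion). These are part of the motivic-category package for $\SH$ and its cousins, but $\H^\fr(k)^\gp$ is not a motivic $\infty$-category in this sense (it is not Thom-stable), and neither proper base change nor closed gluing/localization is established for it in the literature. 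Asserting that these are ``part of the motivic formalism enjoyed by framed motivic spaces'' is the gap. Likewise, the trace transformation of \propref{prop:trace transformation} is constructed via the DJK fundamental-class machinery on $\SH$, not on $\H^\fr$; invoking it there as a natural transformation $f_*f^* \to \id$ would require a parallel framed six-operations development that does not exist.

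The paper's proof is structured precisely to sidestep these issues. For fully faithfulness it does \emph{not} construct a trace transformation of functors on $\H^\fr$: it descends to sections, observes that for a grouplike framed motivic space $\sF$ the relevant composites $\eqref{eq:composite 1}$ and $\eqref{eq:composite 2}$ are already \emph{part of the data} of $\sF$ as a presheaf with framed transfers, and then applies \cite[Proposition~B.1.4]{EHKSY1} directly. This requires no $f_*$, no base change, no DJK --- only the framed-transfer structure and the identification $\End_{\H^\fr(k)^\gp}(\1) \simeq \End_{\SH(k)}(\1)$ from \cite[Theorem~3.5.17]{EHKSY1} to import \lemref{lem:q_epsilon invertible}. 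For essential surjectivity the paper does not attempt the co-unit argument at all; it uses Suslin's lemma (\cite[Lemma~1.12]{suslin-last}) to realize each smooth $k_\perf$-scheme, up to universal homeomorphism, as a base change from $k$, and then uses the recognition principle over the \emph{perfect} field $k_\perf$ (\cite[Theorem~3.5.13(i)]{EHKSY1}) to transport the question into $\SH(k_\perf)\pinv$, where \thmref{thm:radicial invariance} applies. If you want to salvage your approach, you would first have to establish localization and proper base change for group-complete framed motivic spaces; absent that, the argument for the co-unit, and hence for essential surjectivity, does not go through.
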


\begin{proof}
We first show fully faithfulness.
By continuity, it suffices to show that the Frobenius induces fully faithful functors $F^*: \H^{\fr}(k)^{\gp}\cinv \rightarrow \H^{\fr}(k)^{\gp}\cinv$, i.e., that the counit map $\id \rightarrow F_{*}F^*$ is an equivalence after inverting $p$.
For this it suffices to show that, for every grouplike framed motivic space $\sF$ and every smooth $k$-scheme $X$, the induced map of spaces
  \begin{equation*}
    \Gamma(X, \sF) \to \Gamma(F^{-1}(X), F^*(\sF))
  \end{equation*}
is an equivalence after inverting $p$.
Let $\sF^+$ denote the motivic localization of the left Kan extension of $\sF$ along the inclusion from framed correspondences of smooth $k$-schemes to framed correspondences of all $k$-schemes of finite type.
Since the functor $\sF \mapsto \sF^+$ is fully faithful and commutes with $F^*$ (compare \cite[Prop.~11.1.19]{CD}), the map above is identified with the canonical map
  \begin{equation*}
    F^* : \Gamma(X, \sF^+) \to \Gamma(F^{-1}(X), F^*(\sF^+)) \simeq \Gamma(F^{-1}(X), \sF^+).
  \end{equation*}
Arguing just as in the proof of \propref{prop:trace transformation}, we see that the structure of framed transfers on $\sF^+$ gives rise to the two composites \eqref{eq:composite 1} and \eqref{eq:composite 2}, so that \cite[Proposition~B.1.4]{EHKSY1} shows that the map in question is an equivalence after inverting $p_\epsilon$.
We conclude by using the analogue of Lemma~\ref{lem:q_epsilon invertible} for $\End_{\H^{\fr}(k)^{\gp}}(\MonUnit_k)$, which holds because there is a canonical equivalence
\[
\End_{\H^{\fr}(k)^{\gp}}(\MonUnit_k) \rightarrow \End_{\SH(k)}(\MonUnit_k)
\]
by \cite[Theorem~3.5.17]{EHKSY1}.

It remains now to show that $f^*$ is essentially surjective.
Since any smooth irreducible $k_\perf$-scheme is, up to a universal homeomorphism, the base change of a smooth irreducible $k$-scheme \cite[Lemma~1.12]{suslin-last}, it will suffice to show the following claim: for any universal homeomorphism of smooth schemes over $k_\perf$, the induced map in $\H^\fr(k_\perf)^\gp\pinv$ is invertible.
By \cite[Theorem~3.5.13(i)]{EHKSY1} it suffices to show that the induced map in $\SH(k_\perf)\pinv$ is invertible.
This follows directly from \thmref{thm:radicial invariance}.
\end{proof}

\begin{proof}[Proof of \thmref{thm:recog-perf}]
Note that we need only prove the claim when $p > 1$, and that the second claim follows from the first by stabilization.
The equivalence of Corollary~\ref{cor:perfection invariance} restricts to an equivalence
  \begin{equation*}
    \SH^\veff(k)\pinv \rightarrow \SH^\veff(k_{\perf})\pinv
  \end{equation*}
by construction.
Combining this with \lemref{lem:Hfr invariance}, we see that we may replace $k$ by its perfection.
In that case, the statement is \cite[Theorem~3.5.13(i)]{EHKSY1}.
\end{proof}


\bibliographystyle{alphamod}

\let\mathbb=\mathbf

{\small
\bibliography{references}
}

\parskip 0pt

\end{document}